\numberwithin{equation}{section}
\newtheorem{prop}{Proposition}
\newtheorem{defi}{Definition}
\newtheorem{remark}{Remark}
\newcommand{\R}{\mathbb{R}}
\newcommand{\Esp}{\mathbb{E}}
\newcommand{\Cov}{\mathbb{C}\mathrm{ov}}
\newcommand{\Var}{\mathbb{V}\mathrm{ar}}
\newcommand{\coef}{c}
\newcommand{\basis}{v}
\newcommand{\basisVec}{\basis_{., \ell}}
\newcommand{\nbasis}{m}
\newcommand{\nvar}{\ensuremath{d}}
\newcommand{\dimInt}{\{1, \dots, d\}}
\newcommand{\sample}[2]{#1^{(#2)}}
\newcommand{\sobolClosed}[1]{S_{#1}^{\mathrm{c}}}
\newcommand{\sobolClosedD}[1]{D_{#1}^{\mathrm{c}}}
\newcommand{\sobolTot}[1]{S_{#1}^{\mathrm{tot}}}
\newcommand{\sobolTotD}[1]{D_{#1}^{\mathrm{tot}}}
\newcommand{\pfEstim}[1]
{\widehat{#1}^{\mathrm{pf}}}
\newcommand{\spatialLocation}{\ensuremath{z}} 
\newcommand{\inputVector}{\ensuremath{X} }
\newcommand{\nOutputDimensions}{\ensuremath{L}}
\newcommand{\numberPickFreezeSamples}{\ensuremath{N}}
\newcommand{\probDist}{\mu}
\newcommand{\indexInput}{i}
\newcommand{\nInputDimensions}{d}
\newcommand{\outputFunction}{Y}
\newcommand{\indexSet}{I}
\newcommand{\SobolNormalized}{S}
\newcommand{\SobolUnnormalized}{D}
\newcommand{\exceptIndexSet}{\sim \indexSet}
\newcommand{\outputValues}{y_{\ell}}
\newcommand{\costDW}{\mathrm{Cost}_\text{DW}}
\newcommand{\costBD}{\mathrm{Cost}_\text{BD}}
\begin{document}

\begin{frontmatter}

\title{Fast pick-freeze estimation of Sobol' sensitivity maps using basis expansions}

\author[label1,label2]{Yuri Taglieri Sáo} 

\author[label1]{Olivier Roustant}

\affiliation[label1]{organization={Institut de Mathématiques de Toulouse, Université de Toulouse, INSA},
            addressline={135, Avenue de Rangueil}, 
            city={Toulouse},
            postcode={31077}, 
            state={Occitanie},
            country={France}}

\author[label2]{Geraldo de Freitas Maciel}

\affiliation[label2]{organization={Engineering College of Ilha Solteira, Civil Engineering Department, São Paulo State University ``Júlio de Mesquita Filho'' (UNESP)},
            addressline={Av. Brasil, 56}, 
            city={Ilha Solteira},
            postcode={15385-000}, 
            state={São Paulo},
            country={Brazil}}

\begin{abstract}

Global sensitivity analysis (GSA) aims at quantifying the contribution of input variables over the variability of model outputs. In the frame of functional outputs, a common goal is to compute \textit{sensitivity maps} (SM), i.e sensitivity indices at each output dimension (e.g. time step for time series, or pixels for spatial outputs). In specific settings, some works have shown that the computation of Sobol' SM can be speeded up by using basis expansions employed for dimension reduction. However, how to efficiently compute such SM in a general setting has not received too much attention in the GSA literature.\\
In this work, we propose fast computations of Sobol' SM using a general basis expansion, with a focus on statistical estimation. First, we write a closed-form expression of SM in function of the matrix-valued Sobol' index of the vector of basis coefficients. Secondly, we consider pick-freeze (PF) estimators, which have nice statistical properties (in terms of asymptotical efficiency) for Sobol' indices of any order. We provide similar \textit{basis-derived} formulas for the PF estimator of Sobol' SM in function of the matrix-valued PF estimator of the vector of basis coefficients. We give the computational cost, and show that, compared to a \textit{dimension-wise} approach, the computational gain is substantial and allows to calculate both SM and their associated bootstrap confidence bounds in a reasonable time. Finally, we illustrate the whole methodology on an analytical test case and on an application in non-Newtonian hydraulics, modelling an idealized dam-break flow. 
\end{abstract}

\begin{highlights}
\item Fast computations of Sobol' sensitivity maps (SMs) based on a general basis expansion;
\item Closed-form expression of SMs using pick-freeze formulas directly in the basis coefficients (\textit{basis-derived} approach);
\item Computational gain analysis of the \textit{basis-derived} approach over the \textit{dimension-wise} approach;
\item Application to an analytical test-case (Campbell2D function) and to a non-Newtonian fluid mechanics data-driven case.
\end{highlights}

\begin{keyword}

Global Sensitivity Analysis \sep Sobol' indices \sep Functional outputs \sep Pick-freeze methods \sep Basis expansion  

\end{keyword}

\end{frontmatter}

\section{Introduction}
\label{Sec.introduction}

Global Sensitivity Analysis (GSA) consists of numerous techniques that apportion the uncertainty of a model output to uncertainties in its inputs \citep{Saltelli2008}. We can understand as ``model'' a computational code that receives input variables (or \textit{factor} or \textit{parameter}) and produces outputs, which may be scalars or vectors, that correspond to real phenomena. GSA has remarkable importance in any study that requires knowing which input variables contributes the most to the variability of outputs, such as in model validation, model calibration and decision-making processes \citep{Morris1991, Sobol2001, Saltelli2008}. In general, but not limited to, GSA aims to identify non-influential and important inputs, to rank the importance of such parameters and to understand the model based on the interactions of the parameters \citep{DaVeiga2021, Iooss2015}. Therefore, GSA stands itself as an invaluable tool for studying any uncertainty-related subject.

An important group of GSA techniques relies on quantifying the variance as uncertainty measure, the so-called ``variance-based methods''. Since the variance quantifies the dispersion of data around the mean, it represents itself as an adequate metric to evaluate the uncertainty of input variables over outputs \citep{Saltelli2010, Borgonovo2016}. The variance also presents convenient statistical properties, which allow us to perform a variance-decomposition procedure (Sobol-Hoeffding decomposition) and obtain the Sobol' indices, which are intuitive sensitivity indices capable of measuring the influence of input variables and their interaction over the variability of the outputs \citep{Sobol2001, Saltelli2008}. 

However, closed analytical formulas for calculating the Sobol' indices exist just for some classes of models, such as Polynomial Chaos Expansion (PCE) \citep{Sudret2008}, and some simple and explicit models, generally used as benchmark cases \citep{Becker2020}. In the majority of practical cases, the Sobol' indices are estimated by sampling-based estimators, which consists of obtaining a finite sample of model evaluations, typically with size on the order of thousands, and applying a discrete estimation technique \citep{Helton2006, Saltelli2008}. This kind of technique is called a ``pick-freeze'' (PF) scheme \citep{Sobol1993, Saltelli2002, Gamboa2016}. Its main advantage is that it is a very general scheme, whose only assumption is the square-integrability of the model with respect to the probability distributions of input variables \citep{DaVeiga2021}. Some works were dedicated to improve the accuracy and efficiency of PF schemes by adopting different sampling strategies \citep{Devroye1986, McKay1995, Tissot2015}; others studied their asymptotic properties for purely Monte-Carlo sampling \citep{Janon2014, Gamboa2016}.

Although very general, PF schemes were primarily developed to perform GSA of scalar outputs. However, many practical cases study temporal and/or spatial outputs, i.e. functional outputs, high-dimensional by definition. This high-dimensionality characteristic means that performing GSA produces results in the form of series or  maps of Sobol' indices. Herein, we call this series/map as \textit{Sensitivity Map} (SM). Therefore, the PF scheme may not be readily applicable, requiring some type of pre-GSA approach to deal with the high dimensionality of outputs. The most direct approach consists in discretizing the domain with a finite number of coordinates, where each coordinate produces a scalar value \citep{Terraz2017}. Then, the model is evaluated on the order of thousand times and the PF scheme is performed on each coordinate, ultimately producing SMs. Although very informative and capable of providing sensitivity indices maps to analysts, two problems may arise from this approach: 1) computational storage may be prohibitive; and 2) the model can be computationally expensive to produce enough realizations for an accurate Sobol' indices estimation through PF schemes. The first problem can be addressed by using dimension-reduction techniques, such as basis expansions (e.g. Principal Component Analysis, B-splines, wavelets), to reduce output dimensionality, consequently reducing memory storage \citep{Abdi2010, Hawchar2017, Nagel2020}. The second problem requires the construction of a metamodel (also called surrogate model or emulator), which approximates accurately the model and is fast to evaluate. Famous families of metamodels include Gaussian process regression \citep{Williams1995}, polynomial chaos expansion \citep{Sudret2008}, artificial neural networks \citep{Zou2009, Fonseca2003}. 

In this context, by applying a basis-expansion technique and by metamodeling and predicting the basis coefficients through the metamodels, the original high-dimensional outputs can be decoded from the reduced-space variables, i.e. the basis components and coefficients, using a scalar product operation. \cite{Marrel2011} chose Principal Component Analysis (PCA) as a basis-expansion technique and used Gaussian Process Regression (GPR) to metamodel the PCA coefficients and to perform GSA using the PF scheme directly in each output dimension, obtaining the SMs. In this work, we name this approach as \textit{dimension-wise} approach, since the PF scheme is performed in each output dimension separately. However, this methodology overlooks the fact that basis coefficients and components could be used for a more efficient GSA framework. In this direction, \cite{Nagel2020} also chose PCA, but used Polynomial Chaos Expansion (PCE) to metamodel the PCA coefficients. By leveraging the orthogonality properties of PCE, the Sobol' indices of the PCA coefficients are calculated analytically and, therefore, no PF scheme is needed. By decoding the Sobol' indices from the reduced space to the original space, SMs were obtained. Herein, we name this approach as \textit{basis-derived} approach, since the SMs are estimated directly from the basis coefficients and components. Although efficient, it is not general: the analytical treatment of Sobol' indices makes the framework limited only to PCE. The application of other metamodeling techniques, e.g. ANN (Artificial Neural Networks) and Gaussian Process Regression (GPR), would not be possible since they require a closed expression for the Sobol' indices. In a similar framework as \cite{Nagel2020}, using PCA and GPR metamodeling, \cite{Li2020} compute the Sobol' indices by using the basis coefficients. Eventually, notice that the aforementioned works do not explore the statistical estimation of the Sobol' indices computed with basis coefficients.

In this work, we investigate the \textit{basis-derived} approach to obtain SMs for general basis expansions. First, we obtain a closed-form expression of SM in function of the matrix-valued Sobol’ index of the vector of basis coefficients (Prop. \ref{prop:sobolWithBasis}). Then, we focus on statistical estimation. We show that the same formula remains true when we replace the Sobol' indices by their PF estimators (Prop. \ref{prop:sobolHatWithBasis}). This remarkable property is due to the fact that usual PF estimators are quadratic form of the two samples used in PF schemes (Remark \ref{rem:pickFreezeFormulaAndQuadraticForm}). We assess the computational cost (or numerical complexity) of the \textit{basis-derived} and \textit{dimension-wise} approaches (Prop. \ref{prop:ratio_costs}). It appears that the \textit{basis-derived} approach is much less computationally demanding and allows to estimate both the SMs and their bootstrap confidence bounds in a reasonable time. We illustrate the performance of the \textit{basis-derived} approach on an analytical test-case (Campbell2D function), which has spatial maps as outputs, and on an idealized gradual dam-break flow of non-Newtonian fluid flow, which produces Sobol' indices time series. 

The work is organized as follows. Section \ref{Sec.background} presents theoretical background and concepts of GSA. Section \ref{Sec.contributions} presents our contribution to the study of PF schemes. Section \ref{Sec.applications} shows the applications chosen for this work, i.e. the Campbell2D function and the gradual dam-break problem of non-Newtonian fluids. Finally, Section \ref{sec.conclusion} closes with the conclusions and future works.

\section{Background}
\label{Sec.background}

In this section, we present the theoretical ground of the variance decomposition and the concepts of sensitivity indices for variance-based GSA. Except for Section \ref{sec:PickFreezeVectorValued}, functions are scalar-valued. Most of the material is standard, and can be found e.g. in \cite{DaVeiga2021}.

\subsection{Sobol-Hoeffding decomposition and Sobol' indices for (scalar-valued case)}

We represent $\inputVector$ as the vector of input variables $\inputVector=(\inputVector_1, ..., \inputVector_{\nInputDimensions}) \in \mathbb{R}^{\nInputDimensions} $ and $\nInputDimensions$ the number of input variables. Each input variable has uncertainties associated to it and we model them as random variables, denoting $\probDist_{\inputVector}$ as the probability distribution of $\inputVector$. The components $\inputVector_{\indexInput}$ are assumed to be independent, probability distribution $\probDist_{\inputVector_{\indexInput}}$. The corresponding model output, here assumed to be scalar, is a random variable written as

\begin{equation}
    \outputFunction = f(\inputVector).
\end{equation}

We assume that $E(Y^2)<+\infty$. Equivalently, $f$ belongs to the Hilbert space $\mathbb{L}^2(\probDist_{\inputVector})$. We denote by $\langle ., . \rangle$ its inner product : $\langle f, g \rangle = \int fg \, d\mu_X$. According to the Sobol-Hoeffding decomposition \citep[see e.g.][]{Sobol1993}, $f(\inputVector)$ can be uniquely decomposed as a sum of terms with increasing complexity (Eq. \ref{Eq.hoeffding_decomposition}).

\begin{equation}
    f(\inputVector) = \sum_{\indexSet \in \mathcal{P}_{\nInputDimensions}} f_I(\inputVector_{\indexSet})
    \label{Eq.hoeffding_decomposition}
\end{equation}

such that for all $I \in \mathcal{P}_{\nInputDimensions}$ and all strict subset $J \subsetneq I$, $\Esp[f_I(X_I) \vert X_J] = 0$ (with the convention $\Esp[. \vert X_\emptyset] = \Esp[.]$).

Two properties hold for the terms in Eq. \ref{Eq.hoeffding_decomposition}. First, the zero-mean property or centered-mean property, where $\Esp [ f_{\indexSet} (\inputVector_{\indexSet}) ] = 0 $. Second, the orthogonality property in $\mathbb{L}^2(\probDist_{\inputVector})$, i.e. $\langle f_{\indexSet}, f_{\indexSet^{'}} \rangle = \Esp[ f_\indexSet(\inputVector_{\indexSet}) f_{\indexSet^{'}}(\inputVector_{\indexSet^{'}}) ] = 0$. This leads to the variance decomposition formula (Eq. \ref{Eq.decomposed_variance}):

\begin{equation}
    \Var(f(\inputVector)) = \sum_{\indexSet \in \mathcal{P}_{\nInputDimensions}, \indexSet \neq \emptyset} \Var(f_\indexSet(\inputVector_{\indexSet}))
    \label{Eq.decomposed_variance}
\end{equation}

Equation \ref{Eq.decomposed_variance} allows us to account quantitatively the sensitivity of each input and their interaction over the outputs. The influence of the group of input variables $\inputVector_{\indexSet}$ can be quantified by the variance explained by $f_{\indexSet}(\inputVector_{\indexSet})$. We call this quantity \textit{Sobol' index}: if normalized by the total variance, we represent it as $\SobolNormalized_{(\cdot)}$; otherwise, we represent the unnormalized index (also called partial variance) as $\SobolUnnormalized_{(\cdot)}$. The overall variance is represented by $\SobolUnnormalized$, without any subscript. Definition \ref{Def.Sobol_indices_scalar} provides the expressions of Sobol' indices and the different types of indices (first-order, closed, total and second-order). A brief interpretation follows.

\begin{defi}[(Unnormalized) Sobol' indices] 
Let $\indexSet \subseteq \dimInt$ and subscript ${(\cdot)}_{\exceptIndexSet}$ to indicate that variables from the subset $\indexSet$ were excluded from the whole set. The following equations are associated to the group variables $\inputVector_{\indexSet}$.
\label{Def.Sobol_indices_scalar}
\begin{itemize}
    \item First-order Sobol' index: $\SobolUnnormalized_{\indexSet} = \Var \left( f_{\indexSet}(\inputVector_{\indexSet}) \right)$.
    \item Closed Sobol' index: $\SobolUnnormalized_{\indexSet}^{c} = \Var \left( \Esp \left[ f(\inputVector) \vert \inputVector_{\indexSet} \right] \right) $.
    \item Total Sobol' index (see Remark \ref{Remark.total_Sobol}): $\SobolUnnormalized_{\indexSet}^{tot} = \SobolUnnormalized - \SobolUnnormalized_{\exceptIndexSet}^{c}$.
    \item Second-order Sobol' index (see Remark \ref{Remark.2nd_Sobol}): $\SobolUnnormalized_{\{i,j \}} = D_{\{i,j\}}^{c} - D_i - D_j$, where $i,j \in \dimInt$. 
\end{itemize}

\end{defi}

For simplicity, let us choose $I = \{ i \}$,  $I^{'} = \{ i, j \}$ and consider unnormalized indices. The first-order Sobol' index of input variable $X_i$, given by $\SobolUnnormalized_i$, is the isolated contribution of $X_i$, or main effect of $X_i$. The closed Sobol' index of $X_{I^{'}}$, given by $\sobolClosedD{i,j}$, contains the isolated effects of $X_i$ and $X_j$ and the interaction between both variables; therefore $\sobolClosedD{i,j} = \SobolUnnormalized_{i} + \SobolUnnormalized_{j} + \SobolUnnormalized_{ij}$. If $I^{'} = I = {\{ i \}}$, we have $\sobolClosedD{i} = \SobolUnnormalized_i$. The total Sobol' index of input variables $X_i$ and $X_j$, given by $\SobolUnnormalized^{tot}_{I^{'}}$, represents the main effect of both variables and the interaction of $\{ X_i, X_j\}$ with the remaining ones $X_{\sim I^{'}}$. Lastly, the second-order Sobol' index of input variables $X_i$ and $X_j$, given by $\SobolUnnormalized_{i,j}$, is the interaction between both of them.

\begin{remark}
\label{Remark.total_Sobol}
The total Sobol' index is related to the law of total variance:
$$\Var \left( f(\inputVector) \right) = \Var \left( \Esp \left[ f(\inputVector) \vert \inputVector_{\exceptIndexSet} \right] \right) + \Esp \left[ \Var \left( f(\inputVector) \vert \inputVector_{\exceptIndexSet} \right) \right]$$
which gives another interpretation of it as $\SobolUnnormalized_{\indexSet}^{tot} = \Esp \left[ \Var \left( f(\inputVector) \vert \inputVector_{\exceptIndexSet} \right) \right]$.
\end{remark}

\begin{remark}
\label{Remark.2nd_Sobol}
For all set $I$, each term of the Sobol-Hoeffding decomposition is given by the recursion formula $$ f_{\indexSet}(\inputVector_{\indexSet}) = \Esp[ f(\inputVector) \vert \inputVector_{\indexSet} ] - \sum_{J \subsetneq \indexSet} f_J(X_J)$$
This can be used to derive all Sobol' indices recursively from the closed Sobol' indices associated to subsets of variables. For instance, using the orthogonality of the decomposition, the second-order Sobol' index is written 

\begin{eqnarray*}
  \Var \left( f_{ij}(\inputVector_{ij}) \right) &=& \Var \left( \Esp \left[ f(\inputVector) \vert \inputVector_i, \inputVector_j \right] \right) - \Var \left( f_{i}(\inputVector_{i}) \right) - \Var \left( f_{j}(\inputVector_{j}) \right)  \\
  &=& \sobolClosedD{i,j} - \sobolClosedD{i} - \sobolClosedD{j}
\end{eqnarray*}

\end{remark}

\bigskip

For most problems, there are no closed formulas for the exact calculation of Sobol' indices, except for simple explicit models and for some classes of models, such as PCE-derived models \citep{Sudret2008}. In practice, the indices are estimated through sampling-based methods, where a finite sample of evaluations is employed for the estimation. In the sequel, we focus on pick-freeze (``PF'') schemes or estimators \citep{Sobol1993, Saltelli2008, Gamboa2013}, which have nice statistical properties and are applicable to general sets $I$.

\subsection{Pick-freeze estimators for scalar-valued functions}
\label{Subsec:pick_freeze_scalar}
Let $f : \R^{\nvar} \to \R$ be a scalar-valued function in $\mathbb{L}^2(\mu_{\inputVector})$. In the PF scheme, one considers two independent random vectors $X$ and $Z$ drawn from  $\probDist_X$. The model output is evaluated twice: firstly, by computing $Y= f(X)$; and secondly, by fixing (freezing) the coordinates of $X$ in $I$ and choosing (picking) the other coordinates (denoted ${(\cdot)}_{\exceptIndexSet}$) in $Z$, leading to $Y^{*} = f(\inputVector_{\indexSet}, Z_{\exceptIndexSet})$. By definition of $X$ and $Z$, we can see that $Y$ and $Y^*$ have the same probability distribution. But they are now dependent and their correlation is proved to be equal to the closed Sobol' index:

\begin{equation}
    \sobolClosed{\indexSet} = \frac{\Cov(Y, Y^{*})}{\Var ( Y )}
\label{Eq.closed_Sobol_variance_form} 
\end{equation}

\bigskip

This formula can be rewritten such that the first two moments of the model output involve the two copies $X$ and $Z$. Indeed, as $\mu_X = \mu_Z$, the first moment is written $f_0 = \Esp[Y] = \Esp[Y^{*}] = \Esp \left[ \frac{Y + Y^*}{2} \right]$. The latter expression is preferable for estimation, as the associated empirical estimator has a smaller quadratic risk. Similarly, the second moment is rewritten $\Esp[Y^2] = \Esp \left[ \frac{Y^2 + (Y^*)^2}{2} \right]$. Finally, using that $\Cov(Y, Y^*) = \Esp[Y Y^*] - f_0^2$ and $\Var(Y) = \Esp[Y^2] - f_0^2$, Eq. \ref{Eq.closed_Sobol_variance_form} is rewritten:

\begin{equation} 
    \sobolClosed{\indexSet} = \frac{ \Esp[ Y Y^{*} ] - \left( \Esp \left[ \frac{Y + Y^*}{2} \right] \right)^2 }{ \Esp \left[ \frac{Y^2 + (Y^*)^2}{2} \right] - \left( \Esp \left[ \frac{Y + Y^*}{2} \right] \right)^2}
\label{Eq.closed_Sobol_expected_value_form}  
\end{equation}

In practice, an empirical estimator must be computed. We consider independent samples $\sample{\inputVector}{1}, \dots, \sample{\inputVector}{\numberPickFreezeSamples}$ and $\sample{Z}{1}, \dots, \sample{Z}{\numberPickFreezeSamples}$
drawn from $\probDist_X$. 
The empirical versions of $Y$ and $Y^*$ are then given by the random variables
\begin{equation} \label{eq:PF_output}
Y_{(k)} = f(X^{(k)}), \qquad Y_{(k)}^* = f(X_I^{(k)}, Z_{\sim I}^{(k)}) \qquad (k=1, \dots, N).    
\end{equation}

\begin{defi}[Pick-freeze estimator of closed Sobol' indices for scalar-valued functions] 
\label{Def.pick_freeze_scalar}
The empirical estimator of the closed Sobol' index $\widehat{\sobolClosed{\indexSet}}^{pf}$  corresponding to Eq. \eqref{Eq.closed_Sobol_expected_value_form}, called Janon-Monod pick-freeze estimator \citep{Monod2006, Janon2014}, is given by 

\begin{equation}
    \label{Eq.pick_freeze_scalar_janon}
    \pfEstim{\sobolClosed{\indexSet}} = 
    \frac{\pfEstim{\sobolClosedD{I}}}{\pfEstim{\Var}}
\end{equation}
with 
\begin{eqnarray}
    \pfEstim{\sobolClosedD{I}} &=&
\frac{1}{\numberPickFreezeSamples} \sum_{k=1}^\numberPickFreezeSamples Y_{(k)} Y^{*}_{(k)} - \left( \pfEstim{f_0} \right)^2 \\
\pfEstim{\Var} &=& \frac{1}{\numberPickFreezeSamples} \sum_{k=1}^{\numberPickFreezeSamples} \left[ \frac{Y^2_{(k)} + (Y^{*}_{(k)})^2}{2} \right]- \left( \pfEstim{f_0} \right)^2
\end{eqnarray}
and 

\begin{equation} \label{eq:PF_f0}
\pfEstim{f_0} = \frac{1}{\numberPickFreezeSamples} \sum_{k=1}^{\numberPickFreezeSamples} \left[ \frac{Y_{(k)}+Y^{*}_{(k)}}{2} \right].
\end{equation}

\end{defi}

The PF estimator \eqref{Eq.pick_freeze_scalar_janon} has nice statistical properties: it is consistent, asymptotically normally distributed, and asymptotically efficient among a class of exchangeable variables, meaning that it has the smallest variance in this class when $N$ tends to infinity. We refer to \cite{Janon2014} for more details. 

Following Remark \ref{Remark.2nd_Sobol}, one can deduce a natural PF estimator for a Sobol' index associated to a set of variables $I$, from the (Janon-Monod) PF estimators of the closed Sobol' indices associated to subsets of $I$. For instance, for second-order indices, one can set
$$\pfEstim{\SobolUnnormalized_{i,j}} := \pfEstim{\sobolClosedD{i,j}} - \pfEstim{\sobolClosedD{i}} - \pfEstim{\sobolClosedD{j}}, \qquad \pfEstim{\SobolNormalized_{i,j}} = \frac{\pfEstim{\SobolUnnormalized_{i,j}}}{\pfEstim{\Var}}$$
Similarly, from Definition \ref{Def.Sobol_indices_scalar}, a natural candidate for a PF estimator of total Sobol' indices is written

\begin{equation}
\label{Eq.Sobol_total}
    \pfEstim{\sobolTot{\indexSet}} = 1 - \pfEstim{\sobolClosed{\exceptIndexSet}}
\end{equation}

A second option explores Jansen's formula \citep{Jansen1999}, which is recommended to evaluate total Sobol' indices \citep{Saltelli2010, DaVeiga2021}. The expression is given by Eq. \ref{Eq.Jansen}.

\begin{equation}
    \label{Eq.Jansen}
    \pfEstim{\sobolTot{\indexSet}} = \frac{\pfEstim{\sobolTotD{\indexSet}}}{ \pfEstim{\Var}}, \qquad \pfEstim{\sobolTotD{\indexSet}} = \frac{1}{2 \numberPickFreezeSamples} \sum_{k=1}^{\numberPickFreezeSamples} \left( Y_{(k)} - Y^{*}_{(k)} \right)^2 
\end{equation}

Indeed, the numerator is non-negative and has the same statistical properties as Janon-Monod estimator: consistency, asymptotic normality and asymptotic efficiency \citep{fruth2014total}. 

Actually, many PF estimators can be produced by combining different methods \citep{Saltelli2010, Owen2013, Azzini2021} for the numerator (partial variance) and denominator (overall variance). Here, we will focus on the Monod-Janon estimator \eqref{Eq.pick_freeze_scalar_janon} and Jansen estimator \eqref{Eq.Jansen} for the estimation of first-order and total Sobol' indices respectively. Nevertheless, we will see in the next section that our main result also applies to other PF estimators, as soon as they can be expressed with quadratic forms in the variables  $Y_{(1)},\dots, Y_{(N)}, Y_{(1)}^*,\dots, Y_{(N)}^*$ (see Remark~\ref{rem:pickFreezeFormulaAndQuadraticForm}).

\subsection{Pick-freeze estimators for vector-valued functions} \label{sec:PickFreezeVectorValued}

Here, we define natural extensions of the PF estimators to vector-valued functions. They will be useful in the sequel.

\begin{defi}[Unnormalized closed Sobol' index for vector-valued functions]
\label{Def.Sobol_indices_vector}
Let $\indexSet \in \mathcal{P}_{\nInputDimensions}$,  $f \in \mathbb{L}^2(\probDist)$, and $\inputVector$ be a random vector with law $\mu$. The unnormalized closed Sobol' index of $f(\inputVector) = Y$, denoted $\sobolClosedD{\indexSet}(Y)$, is defined as the covariance matrix of the random vector $\Esp[Y \vert \inputVector_{\indexSet}]$:

$$ \sobolClosedD{\indexSet} (Y) = \Cov(\Esp[ Y \vert \inputVector_{\indexSet}]) $$

\end{defi}

The PF estimators of the unnormalized closed or total Sobol' index and the overall variance are immediately extended for vector-valued function, by replacing products by dot products in Definition \ref{Def.pick_freeze_scalar}. However, some care is needed to define a ratio between such quantities, as they are now matrices. Actually, we will not pursue in this way, as we will not need it.

\begin{defi}[Pick-freeze estimator for vector-valued functions] 
\label{Def.Pick_freeze_vector}
The pick-freeze estimator of the unnormalized closed Sobol' index, total Sobol' index and the covariance matrix of a vector-valued function, are defined by the matrices
\begin{eqnarray*}
\pfEstim{\sobolClosedD{I}} &=& \frac{1}{\numberPickFreezeSamples} \sum_{k=1}^{\numberPickFreezeSamples} Y_{(k)} (Y^{*}_{(k)})^{\top} - \pfEstim{f_0}\left(\pfEstim{f_0}\right)^\top \\
\pfEstim{\sobolTotD{\indexSet}} &=& 
\frac{1}{2 \numberPickFreezeSamples} \sum_{k=1}^{\numberPickFreezeSamples} 
\left( Y_{(k)} - Y^{*}_{(k)} \right)
\left( Y_{(k)} - Y^{*}_{(k)} \right)^\top \\
\pfEstim{\Cov} &=& \frac{1}{\numberPickFreezeSamples} \sum_{k=1}^{\numberPickFreezeSamples} \left[ \frac{Y_{(k)}Y_{(k)}^\top + Y^*_{(k)}(Y^*_{(k)})^\top}{2} \right] - \pfEstim{f_0}\left(\pfEstim{f_0}\right)^\top
\end{eqnarray*}
with 
$$
\pfEstim{f_0} = \frac{1}{\numberPickFreezeSamples} \sum_{k=1}^{\numberPickFreezeSamples} \left[ \frac{Y_{(k)} + Y^{*}_{(k)}}{2} \right] 
$$

\label{Definition_Pick_freeze_vector}
\end{defi}

\section{Contributions}
\label{Sec.contributions}

The contributions of this work are focused on applications that employ basis expansions to reduce dimensionality. Let us consider that output data $\outputValues (\inputVector)$ are expanded in a functional basis of dimension $\nbasis$, where $(\cdot)_{\ell}$ represents the index of each output dimension, as follows:

\begin{equation}
    \label{Eq.basis_expansion}
    \outputValues (\inputVector) = \sum_{i=1}^{\nbasis} \coef_i(\inputVector) \basis_{i, \ell} 
\end{equation}

\noindent where $(c_1(X), \dots, c_m(X))$ are basis coefficients and $(\basis_{1, \ell}, \dots, \basis_{m, \ell})$ are basis components.
We set $\coef(X) = (c_1(X), \dots, c_m(X))^\top$ and $\basisVec = (\basis_{1, \ell}, \dots, \basis_{m, \ell})^\top$ the corresponding vectors.

The following proposition develops the expression of closed Sobol' indices considering the basis coefficients. 

\begin{prop}[Expression of closed Sobol' indices with basis coefficients] \label{prop:sobolWithBasis}
Let $\outputValues (\inputVector) = \basisVec^\top \ \coef(\inputVector)$ and, for all $\indexSet \subseteq \dimInt$,
\begin{equation} \label{eq:closedIndexWithCoef}
\sobolClosed{\indexSet}(\outputValues(\inputVector)) = \frac{\basisVec^\top \, \sobolClosedD{\indexSet}(\coef(\inputVector)) \, \basisVec}{\basisVec^\top \, \Cov(\coef(\inputVector)) \basisVec}
\end{equation}
\end{prop}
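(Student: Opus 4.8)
The plan is to reduce the claim to the scalar definition of the closed Sobol' index applied to the output coordinate $\outputValues(\inputVector)$ and then to exploit linearity, the key point being that the basis components $\basisVec$ are deterministic while all the randomness of $\outputValues(\inputVector) = \basisVec^\top\,\coef(\inputVector)$ is carried by the coefficient vector $\coef(\inputVector)$. First I would rewrite the normalized closed index as a ratio of scalar variances, $\sobolClosed{\indexSet}(\outputValues(\inputVector)) = \Var(\Esp[\outputValues(\inputVector)\mid \inputVector_{\indexSet}]) / \Var(\outputValues(\inputVector))$, which is just Eq.~\eqref{Eq.closed_Sobol_variance_form} combined with Definition~\ref{Def.Sobol_indices_scalar} (closed index over overall variance).

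For the numerator, I would use linearity of the conditional expectation together with the fact that $\basisVec$ does not depend on $\inputVector$, giving $\Esp[\outputValues(\inputVector)\mid\inputVector_{\indexSet}] = \basisVec^\top\,\Esp[\coef(\inputVector)\mid\inputVector_{\indexSet}]$. Then the elementary identity $\Var(a^\top V) = a^\top\,\Cov(V)\,a$, valid for any deterministic vector $a$ and square-integrable random vector $V$, applied with $a=\basisVec$ and $V=\Esp[\coef(\inputVector)\mid\inputVector_{\indexSet}]$, yields $\Var\big(\Esp[\outputValues(\inputVector)\mid\inputVector_{\indexSet}]\big) = \basisVec^\top\,\Cov\big(\Esp[\coef(\inputVector)\mid\inputVector_{\indexSet}]\big)\,\basisVec$, and the matrix in the middle is exactly $\sobolClosedD{\indexSet}(\coef(\inputVector))$ by Definition~\ref{Def.Sobol_indices_vector}. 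For the denominator, the same quadratic-form identity applied directly to $\outputValues(\inputVector)=\basisVec^\top\,\coef(\inputVector)$ gives $\Var\big(\outputValues(\inputVector)\big) = \basisVec^\top\,\Cov(\coef(\inputVector))\,\basisVec$. Taking the ratio of the two displays yields~\eqref{eq:closedIndexWithCoef}.

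There is no genuine obstacle in this argument; the only points deserving a word of care are (i) that $\coef(\inputVector)\in\mathbb{L}^2$, so that $\sobolClosedD{\indexSet}(\coef(\inputVector))$ and $\Cov(\coef(\inputVector))$ are well defined — this follows from the assumption $\outputValues(\inputVector)\in\mathbb{L}^2$ and, if one wants it coordinatewise for $\coef$, from the linear independence of the basis components — and (ii) that the statement is implicitly understood under $\basisVec^\top\,\Cov(\coef(\inputVector))\,\basisVec\neq 0$, i.e. the coordinate $\ell$ is not almost surely constant, so that the normalization makes sense. The same linearity mechanism, combined with the recursion in Remark~\ref{Remark.2nd_Sobol}, would moreover give analogous basis-coefficient formulas for first- and second-order Sobol' maps, which explains why the authors phrase the result at the level of closed indices.
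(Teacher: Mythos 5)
Your proof is correct and follows essentially the same route as the paper's: linearity of the conditional expectation to get $\Esp[\outputValues(\inputVector)\mid\inputVector_{\indexSet}] = \basisVec^\top\,\Esp[\coef(\inputVector)\mid\inputVector_{\indexSet}]$, then the quadratic-form identity $\Var(a^\top V)=a^\top\Cov(V)\,a$ (the scalar case of the paper's $\Cov(AV)=A\,\Cov(V)A^\top$) applied to both numerator and denominator. Your added remarks on square-integrability of $\coef(\inputVector)$ and non-degeneracy of the denominator are sensible precisions the paper leaves implicit.
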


\begin{proof}

We should recall that if $\outputFunction$ is a square integrable random vector of size $\nbasis$, and $A$ is a (deterministic) $p \times \nbasis$ matrix, then $\Cov(A \outputFunction) = A \, \Cov(\outputFunction) \, A^\top$. For the numerator, by linearity of the conditional expectation, we have
$$ \Esp[ \outputValues(\inputVector) \vert \inputVector_{\indexSet}] = \basisVec^\top \, \Esp[ \coef(\inputVector) \vert \inputVector_{\indexSet}] $$

Applying the covariance formula with $A = \basisVec^\top$ and $\outputFunction = \Esp[ \coef(\inputVector) \vert \inputVector_{\indexSet}]$ gives the expression of the numerator:

$$ \sobolClosedD{\indexSet}(\outputValues(\inputVector)) = \basisVec^\top \, \sobolClosedD{\indexSet}(\coef(\inputVector)) \, \basisVec$$

Using the same logic for the denominator, we have immediately:
$$ \Var(\outputValues(\inputVector)) = \basisVec^\top \, \Cov(\coef(\inputVector)) \, \basisVec$$

This concludes the proof.
\end{proof}

Proposition \ref{prop:sobolWithBasis} shows that the closed Sobol' index of $\outputValues (\inputVector)$ can be computed directly from the random vector of its coefficients $\coef(\inputVector)$ in the basis $\basisVec$. Following Remark \ref{Remark.2nd_Sobol}, formula \eqref{eq:closedIndexWithCoef} can be extended to Sobol' indices of any order and total Sobol indices, as they can be expressed as linear combinations of closed Sobol indices. Notice further that \eqref{eq:closedIndexWithCoef} is similar to formula (11) of \citep{Li2020} written in the context of PCA for first-order Sobol' indices. Therein, a matrix $P$ is used, whose row $\ell$ contain the first PCA eigenvectors corresponding to $\basisVec^\top$ in our framework. With that notation, the numerator (resp. denominator) of \eqref{eq:closedIndexWithCoef} is equal to the coefficient $(\ell, \ell)$ of the diagonal matrix 
$\textrm{diag}(P \sobolClosedD{\indexSet} P^\top)$ 
(resp. 
$\textrm{diag}(P \Cov(\coef(\inputVector)) P^\top)$
), and we retrieve their expression.\\

We now focus on estimation of Sobol' indices. The next proposition shows that Proposition \ref{prop:sobolWithBasis} remains valid when we replace the theoretical quantities by their PF estimator, providing a formula to estimate the SMs.

\begin{prop}[Expression of pick-freeze estimators with basis coefficients] \label{prop:sobolHatWithBasis} For all $\indexSet \subseteq \dimInt$ and all $\ell \in \{1, \dots, L\}$, we have, with the notations of Definition~\ref{Def.pick_freeze_scalar} and Definition~\ref{Def.Pick_freeze_vector},
\begin{equation} \label{eq:closedIndexWithCoefEstim}
\pfEstim{\sobolClosed{\indexSet}}(\outputValues(\inputVector)) = \frac{\basisVec^\top \, \pfEstim{\sobolClosedD{\indexSet}}(\coef(\inputVector)) \, \basisVec}{\basisVec^\top \, \pfEstim{\Cov}(\coef(\inputVector)) \, \basisVec}    
\end{equation}
\end{prop}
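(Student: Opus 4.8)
The plan is to reduce \eqref{eq:closedIndexWithCoefEstim} to the single observation that each scalar pick-freeze building block in Definition~\ref{Def.pick_freeze_scalar} --- the mean $\pfEstim{f_0}$, the numerator $\pfEstim{\sobolClosedD{\indexSet}}$ and the denominator $\pfEstim{\Var}$ --- is a quadratic form in the scalar samples $Y_{(1)},\dots,Y_{(\numberPickFreezeSamples)},Y^*_{(1)},\dots,Y^*_{(\numberPickFreezeSamples)}$ (Remark~\ref{rem:pickFreezeFormulaAndQuadraticForm}), and that the basis expansion $\outputValues(\inputVector)=\basisVec^\top\coef(\inputVector)$ turns those scalar samples into linear images of the vector samples feeding Definition~\ref{Def.Pick_freeze_vector}. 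So first I would fix $\indexSet$ and $\ell$, run one pick-freeze design, and record both the vector samples $C_{(k)}:=\coef(X^{(k)})$, $C^*_{(k)}:=\coef(X_\indexSet^{(k)},Z_{\exceptIndexSet}^{(k)})$ and the scalar samples of $\outputValues$; the point to state carefully is that, since the same realizations $X^{(k)},Z^{(k)}$ are shared by every output coordinate, one has exactly $Y_{(k)}=\basisVec^\top C_{(k)}$ and $Y^*_{(k)}=\basisVec^\top C^*_{(k)}$.

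Next I would push this linear map through each building block. Linearity of the empirical average gives $\pfEstim{f_0}(\outputValues)=\basisVec^\top\pfEstim{f_0}(\coef)$, hence the squared-mean term equals $\basisVec^\top\pfEstim{f_0}(\coef)\left(\pfEstim{f_0}(\coef)\right)^\top\basisVec$. The only algebraic fact needed beyond that is the rank-one identity $(\basisVec^\top a)(\basisVec^\top b)=\basisVec^\top ab^\top\basisVec$, valid for all $a,b\in\R^{\nbasis}$ since both sides are scalars. Applying it with $(a,b)=(C_{(k)},C^*_{(k)})$ and averaging turns $\frac{1}{\numberPickFreezeSamples}\sum_k Y_{(k)}Y^*_{(k)}$ into $\basisVec^\top$ times $\frac{1}{\numberPickFreezeSamples}\sum_k C_{(k)}(C^*_{(k)})^\top$ times $\basisVec$; subtracting the squared-mean term and reading off Definition~\ref{Def.Pick_freeze_vector} yields $\pfEstim{\sobolClosedD{\indexSet}}(\outputValues)=\basisVec^\top\pfEstim{\sobolClosedD{\indexSet}}(\coef)\,\basisVec$, the numerator of \eqref{eq:closedIndexWithCoefEstim}. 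The same identity with $(a,b)=(C_{(k)},C_{(k)})$ and $(a,b)=(C^*_{(k)},C^*_{(k)})$ handles the symmetrized second moment and gives $\pfEstim{\Var}(\outputValues)=\basisVec^\top\pfEstim{\Cov}(\coef)\,\basisVec$, the denominator. The ratio of the two identities is exactly \eqref{eq:closedIndexWithCoefEstim}.

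I would close with a remark that nothing used the specific Janon--Monod form beyond its being a quadratic form in the $Y$'s and $Y^*$'s: the same substitution applies verbatim to the Jansen total-index estimator through $(Y_{(k)}-Y^*_{(k)})^2=\basisVec^\top(C_{(k)}-C^*_{(k)})(C_{(k)}-C^*_{(k)})^\top\basisVec$, and, via Remark~\ref{Remark.2nd_Sobol}, to PF estimators of Sobol' indices of any order, since those are linear combinations of closed ones.

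As for difficulty, I do not anticipate a genuine obstacle: the argument is a one-pass substitution, and its only real content --- that the exact identity of Proposition~\ref{prop:sobolWithBasis} survives estimation --- is precisely the quadratic-form structure already isolated in Remark~\ref{rem:pickFreezeFormulaAndQuadraticForm}. The two things to watch are the transpose bookkeeping in the rank-one identity, and making explicit that the coefficient samples used in Definition~\ref{Def.Pick_freeze_vector} are common to all $\ell$, so that the per-dimension scalar estimators and the single matrix-valued estimator are mutually consistent.
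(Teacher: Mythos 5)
Your proposal is correct and follows essentially the same route as the paper's proof: both rest on the observation that the Janon--Monod numerator and denominator are quadratic forms in $Y_{(1)},\dots,Y_{(N)},Y^*_{(1)},\dots,Y^*_{(N)}$, so that substituting $Y_{(k)}=\basisVec^\top \mathcal{C}_{(k)}$ and using $(\basisVec^\top a)(\basisVec^\top b)=\basisVec^\top ab^\top\basisVec$ sandwiches the matrix-valued estimators of Definition~\ref{Def.Pick_freeze_vector} between $\basisVec^\top$ and $\basisVec$. Your version merely spells out block by block what the paper phrases abstractly via the extended quadratic form $Q$, and your closing remark on Jansen and higher-order indices matches Remark~\ref{rem:pickFreezeFormulaAndQuadraticForm}.
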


\begin{proof}

Consider the same random vectors $\inputVector$ and $Z$ from Subsec. \ref{Subsec:pick_freeze_scalar} and let $Y = \outputValues(\inputVector) = \basisVec^\top \coef(\inputVector)$ and $Y^{*} = \basisVec^\top \coef(\inputVector_{\indexSet}, Z_{\exceptIndexSet})$. Let us denote $\mathcal{C} = \coef(\inputVector)$ and $\mathcal{C}^{*} = \coef(\inputVector_{\indexSet}, Z_{\exceptIndexSet})$, similarly to Definition \ref{Def.pick_freeze_scalar} for $f(\inputVector)$.\\
Let us first consider the PF estimator of the partial variance of $\outputValues(\inputVector)$.
By substituting it in Definition \ref{Def.pick_freeze_scalar}, we have 
$$\pfEstim{\sobolClosedD{\indexSet}}(\outputValues(\inputVector))
= Q(Y_{(1)}, \dots, Y_{(N)}, Y_{(1)}^*, \dots, Y_{(N)}^*)$$
where $Q$ is the quadratic form in the variables $Y_{(1)}, \dots, Y_{(N)}, Y_{(1)}^*, \dots, Y_{(N)}^*$
$$ Q(Y_{(1)}, \dots, Y_{(N)}, Y_{(1)}^*, \dots, Y_{(N)}^*) = \frac{1}{\numberPickFreezeSamples} \sum_{k=1}^{\numberPickFreezeSamples} Y_{(k)}Y_{(k)}^* - \left( \frac{1}{\numberPickFreezeSamples} \sum_{k=1}^{\numberPickFreezeSamples} \frac{Y_{(k)}+Y_{(k)}^*}{2}
\right)^2$$
Notice that the definition of $Q$ can be extended to vectors by replacing products with dot products:
\begin{eqnarray*}
    Q(Y_{(1)}, \dots, Y_{(N)}, Y_{(1)}^*, \dots, Y_{(N)}^*) &=& \frac{1}{\numberPickFreezeSamples} \sum_{k=1}^{\numberPickFreezeSamples} Y_{(k)}(Y_{(k)}^*)^\top \\
    &-& \left( \frac{1}{\numberPickFreezeSamples} \sum_{k=1}^{\numberPickFreezeSamples} \frac{Y_{(k)}+Y_{(k)}^*}{2}
\right)\left( \frac{1}{\numberPickFreezeSamples} \sum_{k=1}^{\numberPickFreezeSamples} \frac{Y_{(k)}+Y_{(k)}^*}{2}
\right)^\top
\end{eqnarray*}
With this extended definition, we have, from Definition~\ref{Def.Pick_freeze_vector}, 
$$ \pfEstim{\sobolClosedD{\indexSet}}(\coef(\inputVector)) = 
Q(\mathcal{C}_{(1)}, \dots, \mathcal{C}_{(N)}, \mathcal{C}_{(1)}^*, \dots, \mathcal{C}_{(N)}^*)$$
Furthermore, as $Q$ is a quadratic form, a direct computation shows that
\begin{eqnarray*}
  Q(Y_{(1)}, \dots, Y_{(N)}, Y_{(1)}^*, \dots, Y_{(N)}^*) &=&
  Q(\basisVec^\top \mathcal{C}_{(1)}, \dots, \basisVec^\top \mathcal{C}_{(N)}, \basisVec^\top \mathcal{C}_{(1)}^*, \dots, \basisVec^\top \mathcal{C}_{(N)}^*) \\
  &=& \basisVec^\top Q(\mathcal{C}_{(1)}, \dots, \mathcal{C}_{(N)}, \mathcal{C}_{(1)}^*, \dots, \mathcal{C}_{(N)}^*) \basisVec
\end{eqnarray*}
 
Finally, we obtain:
$$\pfEstim{\sobolClosedD{I}}(\outputValues(\inputVector)) = \basisVec^\top \pfEstim{\sobolClosedD{I}}(\coef(\inputVector)) \basisVec $$
The same method can be used for the PF estimator of the overall variance of $\outputValues(\inputVector)$, remarking that it is also defined as a quadratic form in the variables $Y_{(1)}, \dots, Y_{(N)}, Y_{(1)}^*, \dots, Y_{(N)}^*$. This leads to the announced formula.
\end{proof}

\begin{remark}[Extension to other pick-freeze estimators]
\label{rem:pickFreezeFormulaAndQuadraticForm}
Looking at the proof of Proposition~\ref{prop:sobolHatWithBasis}, we see that the result can be extended to all PF estimators that can be expressed with quadratic forms in the variables of the two samples. For instance, for the estimator \ref{Eq.Jansen} of total Sobol' indices, we have, with the notations of Definition~\ref{Def.pick_freeze_scalar} and Definition~\ref{Def.Pick_freeze_vector}:
\begin{equation}
\label{eq:totalIndexWithCoefEstim}
 \pfEstim{\sobolTot{\indexSet}}(\outputValues(\inputVector)) = \frac{\basisVec^\top \, \pfEstim{\sobolTotD{\indexSet}}(\coef(\inputVector)) \, \basisVec}{\basisVec^\top \, \pfEstim{\Cov}(\coef(\inputVector)) \, \basisVec}   
\end{equation}

\end{remark}

Proposition~\ref{prop:sobolHatWithBasis} allow to estimate the SMs of closed Sobol' indices in two approaches and obtain the exact same result, either using the Definition \ref{Def.pick_freeze_scalar} for each output dimension (\textit{dimension-wise} approach) or using formula \eqref{eq:closedIndexWithCoefEstim} (\textit{basis-derived} approach). 
This also applies for the SMs of Sobol' indices and total Sobol' indices, following Remark~\ref{Remark.2nd_Sobol} and Remark~\ref{rem:pickFreezeFormulaAndQuadraticForm}.
In the \textit{dimension-wise} approach, only scalar-valued functions are considered, but the computation of Sobol' indices must be done for each pixel $\ell$. In the \textit{basis-derived} approach, a matrix containing the Sobol' index of the vector of coefficients is first computed. As this matrix does not depend on $\ell$, it can be stored and reused to compute the SM by simple vector-matrix multiplications. Furthermore, as this matrix is generally of small size, equal to the number of basis functions, we can expect a substantial gain in the computational time.

To confirm this intuition, we now quantify the computational cost of the two approaches. We will assume the same cost for additions and multiplications. 

\bigskip

\begin{prop}[Computational cost to compute $\sobolClosed{I}(\outputValues(\inputVector))$] 
\label{prop:ratio_costs}
Assume that the PF samples $\sample{\inputVector}{1}, \dots, \sample{\inputVector}{\numberPickFreezeSamples}$, $\sample{Z}{1}, \dots, \sample{Z}{\numberPickFreezeSamples}$
and the corresponding output values
$c(X^{(k)})$ and $c(X_I^{(k)}, Z_{\sim I}^{(k)})$ have been computed for $k=1, \dots, N$, with $N \gg 1$. Let $H(x_1,x_2) = \frac{2x_1 x_2}{x_1 + x_2}$ be the harmonic mean of two positive real numbers $x_1, x_2$. Then the ratio of computational costs between the \textit{dimension-wise} (DW) and \textit{basis-derived} (BD) approaches is equal to
\begin{equation}
    \frac{(2 \nbasis + 4) \numberPickFreezeSamples \nOutputDimensions}{ 4 \nbasis^2 \numberPickFreezeSamples + \nbasis \numberPickFreezeSamples + \nbasis^2 + \nbasis \nOutputDimensions (\nbasis + 1)}
    > \frac{H(2\numberPickFreezeSamples, \nOutputDimensions)}{3 \nbasis}
    \label{Eq.ratio_costs}
\end{equation}

\begin{proof}
The \textit{dimension-wise} approach first requires obtaining $\nOutputDimensions$ scalar output values by decoding the basis coefficients (Eq. \ref{Eq.basis_expansion}) and then performing scalar-valued pick-freeze estimation over the $\nOutputDimensions$ output dimensions through Definition~\ref{Def.pick_freeze_scalar}, using $Y = \outputValues(\inputVector) = \basisVec^\top \coef(\inputVector)$ and $Y^{*} = \basisVec^\top \coef(\inputVector_{\indexSet}, Z_{\exceptIndexSet})$. Next, the \textit{basis-derived} approach performs the vector-valued pick-freeze estimation (Definition~\ref{Def.Pick_freeze_vector}) considering the $\nbasis$-sized vectors $Y=\coef(\inputVector)$ and $Y^{*} = \coef(\inputVector_{\indexSet}, Z_{\exceptIndexSet})$. Then Proposition~\ref{prop:sobolHatWithBasis} is applied. Tables \ref{Tab.:computational_costs_dimension} and \ref{Tab.:computational_costs_basis} summarize the computational costs related to each mathematical expression calculated in each approach. For the last two operations of Table \ref{Tab.:computational_costs_basis}, we expanded the expression of form $x^\top A x$ as follows

$$ x^\top A x = \sum_{i=1}^m x_i^2 A_{i,i} + 2 \sum_{1 \leq i < j \leq m} x_i x_j A_{i,j}. $$
\\
where $A$ is a symmetric matrix of size $(\nbasis, \nbasis)$ and $x$ is a vector of length $\nbasis$. The computation of $x^\top A x$ can be done with approximately $\frac{3}{2} m(m+1)$ operations.

The ratio of computational costs is immediately deduced, using that $3 \nbasis^2$ can be neglected relatively to $2\nbasis(3\nbasis+1) \numberPickFreezeSamples$ as $N \gg 1$. The lower bound in \eqref{Eq.ratio_costs} is due to the following inequalities (which are sharp when $m$ is large)
$$ \frac{2\nbasis (3 \nbasis + 1)}{\nbasis + 2} < 6 \nbasis, \qquad 
\frac{3\nbasis (\nbasis + 1)}{\nbasis + 2} < 3 \nbasis,
$$
which implies
 $$ \frac{\costDW}{\costBD} \approx \frac{4(\nbasis + 2)
 \numberPickFreezeSamples \nOutputDimensions}{ 2 \nbasis (3 \nbasis + 1) \numberPickFreezeSamples + 3 \nbasis (\nbasis + 1) \nOutputDimensions }
> \frac{
 4 \numberPickFreezeSamples \nOutputDimensions}{ 6 \nbasis \numberPickFreezeSamples + 3 \nbasis \nOutputDimensions }
 = \frac{H(2N, L)}{ 3\nbasis}
$$

\begin{table}[]
    \renewcommand{\arraystretch}{2.0}
    \begin{tabular}{>{\arraybackslash}m{7.5cm}>{\centering\arraybackslash}m{4.5cm}}
    \toprule
        \textbf{\textit{Dimension-wise} approach} & \textbf{Cost} \\
        \toprule
        \multirow{5}{*}{}
        Set $Y = \outputValues(\inputVector)
        = \sum_{i=1}^{\nbasis} \coef_i(\inputVector) \basis_{i, \ell}$ & \\
        Compute $Y_{(k)}, Y_{(k)}^*, \quad k=1, \dots, N$ & $4 \nbasis \numberPickFreezeSamples$ \\
        Compute $\alpha := \frac{1}{\numberPickFreezeSamples} \sum_{k=1}^\numberPickFreezeSamples Y_{(k)} Y^{*}_{(k)}$ & $2\numberPickFreezeSamples$ \\
        Compute $ \beta := \frac{1}{2\numberPickFreezeSamples} \sum_{k=1}^{\numberPickFreezeSamples} \left( Y^2_{(k)} + (Y^{*}_{(k)})^2 \right)$ \vspace{5pt} & $4\numberPickFreezeSamples$ \vspace{5pt}\\
        Compute $\gamma := \frac{1}{2\numberPickFreezeSamples} \sum_{k=1}^{\numberPickFreezeSamples} \left( Y_{(k)}+Y^{*}_{(k)} \right)$ & $2\numberPickFreezeSamples$ \\
        Deduce $\pfEstim{\sobolClosed{I}} (\outputValues(\inputVector)) = \frac{\alpha - \gamma^2}{\beta - \gamma^2}$ & - \\
        \cline{2-2}
        \textbf{Total Cost ($\ell$ fixed)} &  $4( \nbasis + 2) \numberPickFreezeSamples$ \\
        \textbf{Total Cost (for $\ell = 1, \dots, \nOutputDimensions$)} &  $
        \costDW = 4( \nbasis + 2) \numberPickFreezeSamples \nOutputDimensions$ \\
        \bottomrule
    \end{tabular}
    \caption{\textit{Dimension-wise} approach expressions and respective computational costs in function of number of basis components $\nbasis$, length of PF samples $\numberPickFreezeSamples$ and number of output dimensions $\nOutputDimensions$.} 
    \label{Tab.:computational_costs_dimension}
\end{table}

\begin{table}[]
\small
    \centering
    \renewcommand{\arraystretch}{2.0}
    \begin{tabular}{>{\arraybackslash}m{8.5cm}>{\centering\arraybackslash}m{3.5cm}}
    \toprule
        \textbf{\textit{Basis-derived} approach } & \textbf{Cost} \\
        \toprule
        \multirow{8}{*}{} 
        Set $Y = \coef(\inputVector) = (c_1(X), \dots, c_m(X))^\top$ & \\
        Compute $\displaystyle A := \frac{1}{\numberPickFreezeSamples} \sum_{k=1}^{\numberPickFreezeSamples} Y_{(k)} (Y^{*}_{(k)})^{\top}$ & $2 \nbasis^2 \numberPickFreezeSamples$ \\
        Compute $\displaystyle B:=\frac{1}{2\numberPickFreezeSamples} \sum_{k=1}^{\numberPickFreezeSamples} \left[Y_{(k)}Y_{(k)}^\top + Y^*_{(k)}(Y^*_{(k)})^\top \right]$ & 
        $4 \nbasis^2 \numberPickFreezeSamples$
        \\
        Compute $\gamma := \frac{1}{2\numberPickFreezeSamples} \sum_{k=1}^{\numberPickFreezeSamples} \left[ Y_{(k)} + Y^{*}_{(k)} \right]$ & $2 \nbasis \numberPickFreezeSamples$ \\
        Deduce $C = \gamma \gamma^\top$,  $\pfEstim{\sobolClosedD{\indexSet}}(\coef(\inputVector))  = A - C$ and $\pfEstim{\Cov}(\coef(\inputVector)) = B - C$
        & $3 \nbasis^2$\\
        Store $\pfEstim{\sobolClosedD{\indexSet}}(\coef(\inputVector))$ and $\pfEstim{\Cov}(\coef(\inputVector))$ & \\
        For $\ell = 1, \dots, \nOutputDimensions$, compute $\basisVec^\top \, \pfEstim{\sobolClosedD{\indexSet}}(\coef(\inputVector)) \, \basisVec$ & 
        $\displaystyle \approx \frac{3}{2}m(m+1) L $ \\
        For $\ell = 1, \dots, \nOutputDimensions$, compute $\basisVec^\top \, \pfEstim{\Cov}(\coef(\inputVector)) \, \basisVec$  \vspace{5pt} & 
        $\displaystyle \approx \frac{3}{2}m(m+1) L $ 
        \vspace{15pt} \\
        \cline{2-2}
        \vspace{10pt}
        \textbf{Total Cost}:
        $ \costBD \approx 2 \nbasis (3 \nbasis + 1) \numberPickFreezeSamples 
         + 3 \nbasis (\nbasis + 1) \nOutputDimensions$ \\
        \bottomrule
    \end{tabular}
    
    \caption{\textit{Basis-derived} approach expressions and respective computational costs in function of number of basis components $\nbasis$, length of PF samples $\numberPickFreezeSamples$ and number of output dimensions $\nOutputDimensions$. 
    }
    \label{Tab.:computational_costs_basis}
\end{table}

\end{proof}

\end{prop}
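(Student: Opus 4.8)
The plan is to count arithmetic operations (charging additions and multiplications equally) step by step along each pipeline, assemble the two totals $\costDW$ and $\costBD$, form their ratio, and bound it below using $N\gg1$. First I would tackle the \textit{dimension-wise} cost. For a fixed output dimension $\ell$ the scalar pick-freeze samples are not directly available and must be reconstructed from the coefficient vectors through \eqref{Eq.basis_expansion}: each of the $2N$ values $Y_{(k)}=\basisVec^\top\coef(\inputVector^{(k)})$, $Y^*_{(k)}$ is a dot product of two $\nbasis$-vectors, hence $\approx 2\nbasis$ operations, for $\approx 4\nbasis N$ in total; the Janon--Monod estimator of Definition~\ref{Def.pick_freeze_scalar} then needs the three empirical averages $\frac{1}{N}\sum_k Y_{(k)}Y^*_{(k)}$, $\frac{1}{2N}\sum_k(Y_{(k)}^2+(Y^*_{(k)})^2)$ and $\frac{1}{2N}\sum_k(Y_{(k)}+Y^*_{(k)})$, costing $\approx 2N$, $4N$ and $2N$ respectively, with the final division $O(1)$. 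This is $4(\nbasis+2)N$ per dimension, and since $\basisVec$ changes with $\ell$ the whole computation is redone $\nOutputDimensions$ times, giving $\costDW=4(\nbasis+2)N\nOutputDimensions$ (Table~\ref{Tab.:computational_costs_dimension}).

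Next I would tackle the \textit{basis-derived} cost. Here the coefficient vectors are already available, so one directly forms the matrices of Definition~\ref{Def.Pick_freeze_vector}: $A=\frac{1}{N}\sum_k\coef(\inputVector^{(k)})\,\coef(\inputVector^{(k)}_\indexSet,Z^{(k)}_{\exceptIndexSet})^\top$ costs $\approx 2\nbasis^2N$, the matrix $B$ (two $\nbasis\times\nbasis$ outer products per term) costs $\approx 4\nbasis^2N$, and $\gamma$ costs $\approx 2\nbasis N$, while the corrections $C=\gamma\gamma^\top$, $\pfEstim{\sobolClosedD{\indexSet}}(\coef(\inputVector))=A-C$ and $\pfEstim{\Cov}(\coef(\inputVector))=B-C$ cost only $\approx 3\nbasis^2$. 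The decisive point, via Proposition~\ref{prop:sobolHatWithBasis}, is that these matrices are computed once and are independent of $\ell$; it then remains, for each $\ell$, only to evaluate the two quadratic forms $\basisVec^\top\pfEstim{\sobolClosedD{\indexSet}}(\coef(\inputVector))\,\basisVec$ and $\basisVec^\top\pfEstim{\Cov}(\coef(\inputVector))\,\basisVec$. Expanding $x^\top A x=\sum_i x_i^2A_{i,i}+2\sum_{i<j}x_ix_jA_{i,j}$, each such form costs $\approx\frac{3}{2}\nbasis(\nbasis+1)$ (instead of the naive $\approx 2\nbasis^2$), so the two together cost $\approx 3\nbasis(\nbasis+1)\nOutputDimensions$. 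Summing and discarding the $\approx 3\nbasis^2$ corrections, negligible against the $N$-proportional terms since $N\gg1$, gives $\costBD\approx 2\nbasis(3\nbasis+1)N+3\nbasis(\nbasis+1)\nOutputDimensions$.

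Finally I would form the ratio $\costDW/\costBD\approx\dfrac{4(\nbasis+2)N\nOutputDimensions}{2\nbasis(3\nbasis+1)N+3\nbasis(\nbasis+1)\nOutputDimensions}$ — the exact version in \eqref{Eq.ratio_costs} being read directly off Tables~\ref{Tab.:computational_costs_dimension}--\ref{Tab.:computational_costs_basis} — and bound it below. The two elementary inequalities $2\nbasis(3\nbasis+1)<6\nbasis(\nbasis+2)$ and $3\nbasis(\nbasis+1)<3\nbasis(\nbasis+2)$ (trivial, and sharp for large $\nbasis$) enlarge the denominator to $3\nbasis(\nbasis+2)(2N+\nOutputDimensions)$, yielding $\costDW/\costBD>\dfrac{1}{3\nbasis}\cdot\dfrac{4N\nOutputDimensions}{2N+\nOutputDimensions}$, and since $\dfrac{4N\nOutputDimensions}{2N+\nOutputDimensions}=\dfrac{2\,(2N)\,\nOutputDimensions}{2N+\nOutputDimensions}=H(2N,\nOutputDimensions)$ this equals $H(2N,\nOutputDimensions)/(3\nbasis)$. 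I expect the main obstacle to be bookkeeping rather than any single idea: keeping the operation counts consistent for the outer-product assembly and, above all, for the quadratic forms $x^\top Ax$ (the symmetric expansion is exactly what produces the $\frac{3}{2}\nbasis(\nbasis+1)$ factor, hence the constant $3$ in the bound), and carefully charging the decoding step to every $\ell$ in the dimension-wise scheme while charging the matrix assembly only once in the basis-derived scheme — this asymmetry is the entire source of the $\nOutputDimensions/\nbasis$-type gain.
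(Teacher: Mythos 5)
Your proposal is correct and follows essentially the same route as the paper: the same operation counts for the decoding and the three empirical averages in the dimension-wise scheme (giving $4(\nbasis+2)\numberPickFreezeSamples\nOutputDimensions$), the same assembly-once-then-quadratic-forms accounting with the symmetric expansion of $x^\top A x$ costing $\tfrac{3}{2}\nbasis(\nbasis+1)$ in the basis-derived scheme (giving $2\nbasis(3\nbasis+1)\numberPickFreezeSamples+3\nbasis(\nbasis+1)\nOutputDimensions$ after neglecting the $3\nbasis^2$ term for $N\gg1$), and the same pair of elementary inequalities enlarging the denominator to $3\nbasis(\nbasis+2)(2\numberPickFreezeSamples+\nOutputDimensions)$ to obtain the harmonic-mean lower bound $H(2\numberPickFreezeSamples,\nOutputDimensions)/(3\nbasis)$. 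No gaps beyond those already present in the paper's own bookkeeping.
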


Proposition~\ref{prop:ratio_costs} shows that the ratio $\frac{\costDW}{\costBD}$ increases with $\nOutputDimensions$ and decreases with $\nbasis$. This is expected since $\costDW$ requires iterating over the output dimensions $\nOutputDimensions$ whereas $\costBD$ depends on the vector of basis coefficients of size $\nbasis$.
In practice, if the dimension reduction is effective, we may expect that $\nbasis \ll \nOutputDimensions$. We may also choose $\numberPickFreezeSamples$ such that $\nbasis \ll \numberPickFreezeSamples$. Then, $
H(2\numberPickFreezeSamples, \nOutputDimensions) > \min(2\numberPickFreezeSamples, \nOutputDimensions)$, implying that
$$  \frac{\costDW}{\costBD} 
    > \frac{\min(2\numberPickFreezeSamples, \nOutputDimensions)}{3 \nbasis} \gg 1
$$
This confirms the intuition of substantial computational gain of the \textit{basis-derived} approach over the \textit{dimension-wise} approach while obtaining the same exact result.

\section{Applications}
\label{Sec.applications}

In this section, we present applications of GSA in cases with high dimensional outputs (time series and maps) using the contribution described in this work, i.e. PF scheme applied directly in basis components. The goal is to obtain the \textit{Sensitivity Maps} (SMs). The treatment of functional outputs follows a general methodology, explained extensively in \citet{Marrel2011, Nagel2020, Li2020, Perrin2021}. Basically, two steps are followed. First, output data are expanded in a functional basis (Eq. \ref{Eq.basis_expansion}), which allows us to represent data with few $\nbasis$ dimensions instead of $\nOutputDimensions$ dimensions. Second, a metamodel is trained for each basis coefficient, enabling the fast and reasonably accurate prediction of coefficients. Therefore, the PF scheme becomes feasible with low computational resources. If one wants to predict the functional data represented in high-dimension $\nOutputDimensions$, the coefficients can be predicted by the metamodels and then original high-dimensional data can be recovered through a scalar product operation with the basis components.

In the following applications, we employed the Principal Component Analysis (PCA) and Gaussian Process Regression (GPR) methods as the basis-expansion and metamodeling techniques, respectively. The implementation was done in Python language, using the ``scikit-learn'' \citep{Scikitlearn2011} and ``gpflow'' \citep{GPflow2017} libraries for the PCA and the GPR procedures, respectively. The scripts are publicly available in \url{https://osf.io/njgzt/?view_only=9d7d6f02c1e447059bf2792aa34e1db0}. To verify the accuracy of the metamodels' predictions, we employed the $Q^2$ metric \citep{Marrel2011} as indicated by Eq. \ref{eq:q_squared}.

\begin{equation}
    Q^2 = 1 - \frac{\Esp_{\ell} \{ \Esp_{\inputVector} \left[ (\outputValues(\inputVector) - \widehat{\outputValues}(\inputVector))^2 \right]  \} }{\Esp_{\ell} \{ \Var_{\inputVector} [ \outputValues(\inputVector) ] \}}
    \label{eq:q_squared}
\end{equation}

where $\widehat{\outputValues}$ are metamodels' predictions. Here $\Esp_\ell$ denotes the expectation for the uniform probability distribution on $\{1, \dots, L\}$, equal to the average on $\ell$. In practice $\Esp_X$ and $\Var_X$ are approximated by computing the average on a sample drawn from the probability distribution of $X$.
If $Q^2 > 0$, the metamodels' prediction is a better prediction than the average value of the observation. The closest to $1$, the better the prediction accuracy.

\bigskip

Finally, since this work deals with high-dimensional output models, sensitivity indices are distributed along the domain (space or time series). To assess the global influence of each input variable in a single concise index, we employed the Generalized Sensitivity Index (GSI) \citep{Lamboni2011, Perrin2021}, indicated by Eq. \ref{eq:gsi}.

\begin{equation}
    \text{GSI}^c_I = \frac{\sum^{\nbasis}_{k=1} \lambda_k \sobolClosed{I,k}}{\sum^{\nbasis}_{k=1} \lambda_k}
    \label{eq:gsi}
\end{equation}

where $\lambda_k$ is the eigenvalue of the k-th basis component and $\sobolClosed{I,k}$ is the closed Sobol' index of the k-th basis coefficient. The GSI index is defined in a similar way for total Sobol' indices, replacing $\sobolClosed{I,k}$ by $\sobolTot{I,k}$ in \eqref{eq:gsi}.

\bigskip

The following Subsection \ref{Subsec.campbell2d} details the application of GSA in an analytical function (Campbell2D function). Then, Subsection \ref{Subsec.dam_break} describes the case of an idealized gradual dam-break flow case of non-Newtonian fluids, where GSA is applied to the observed time series - the maximum position of the wave (runout) over time.

\subsection{Analytical case - Campbell2D function}
\label{Subsec.campbell2d}

The analytical test case presented here is the Campbell2D function \citep{Marrel2011}. The function has 8 inputs ($ \nInputDimensions = 8$), which can range from $-1$ to $5$, and produces a spatial map $\outputValues(\inputVector)$ as output. Equation \ref{Eq.campbell1} shows the mapping of inputs to spatial outputs. Figures \ref{Fig.campbell_example_1}, \ref{Fig.campbell_example_2}, \ref{Fig.campbell_example_3} shows Campbell2D outputs for different input values, where the spatial domain is discretized on an uniform grid of size $64 \times 64$.

\begin{equation}
\begin{aligned}
f\colon \quad \quad \left[ -1, 5 \right]^2 & \longrightarrow\mathbf L^2 \left( \left[ -90, 90\right]^2 \right) \\
\inputVector = \left( x_1, ..., x_8 \right) & \longmapsto \outputValues \left( \inputVector \right)
\end{aligned}
\end{equation}
\label{Eq.campbell1}

where $\spatialLocation = (z_1, z_2) \in \left[-90,90 \right]^2$.

\bigskip

\begin{figure}[H]
    \centering
    \begin{subfigure}[b]{0.32\textwidth}
        \centering
        \includegraphics[width=\textwidth]{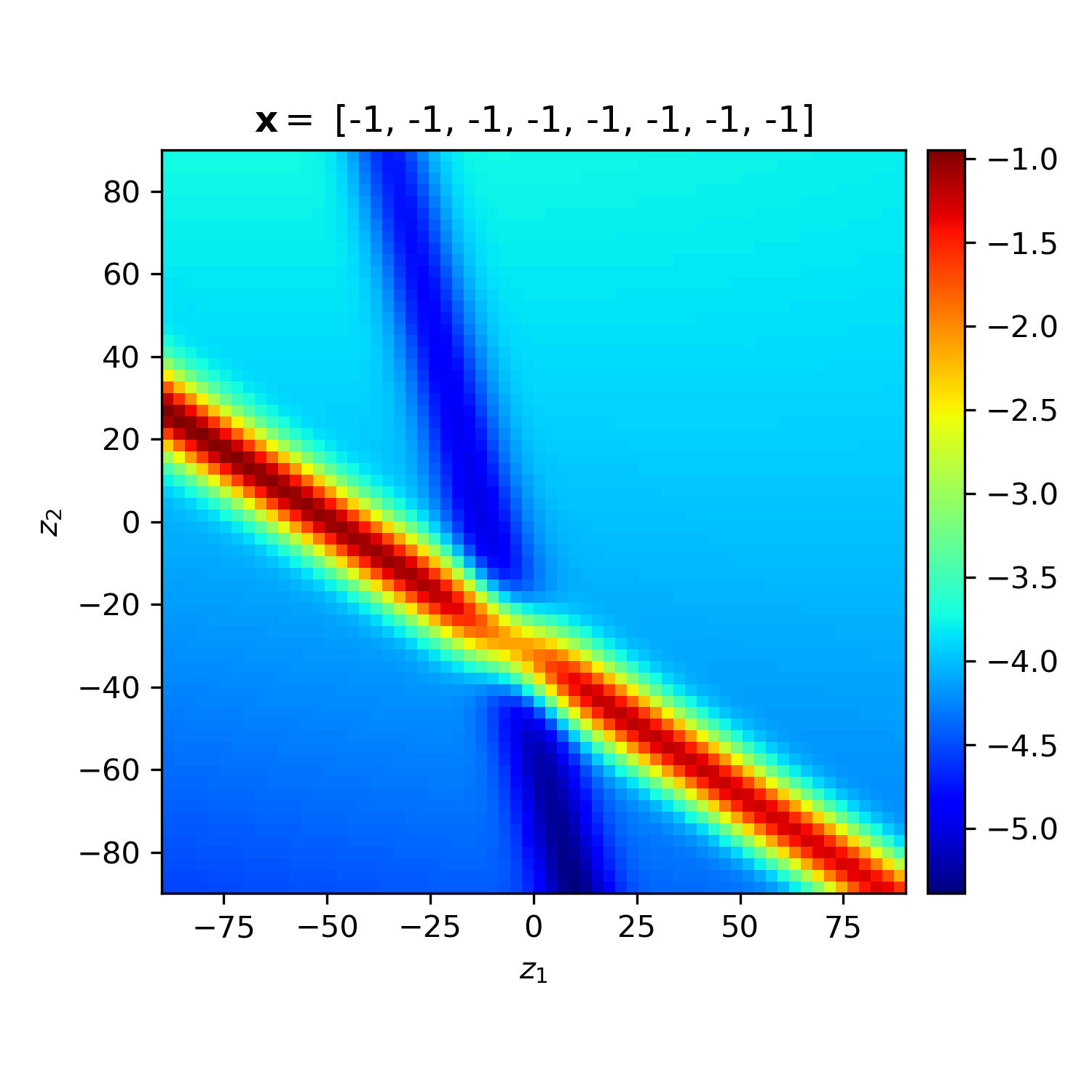}
        \caption{$\inputVector=$ [-1,-1,-1,-1,-1,-1,-1,-1].}
        \label{Fig.campbell_example_1}
    \end{subfigure}
    \hfill
    \begin{subfigure}[b]{0.32\textwidth}
        \centering
        \includegraphics[width=\textwidth]{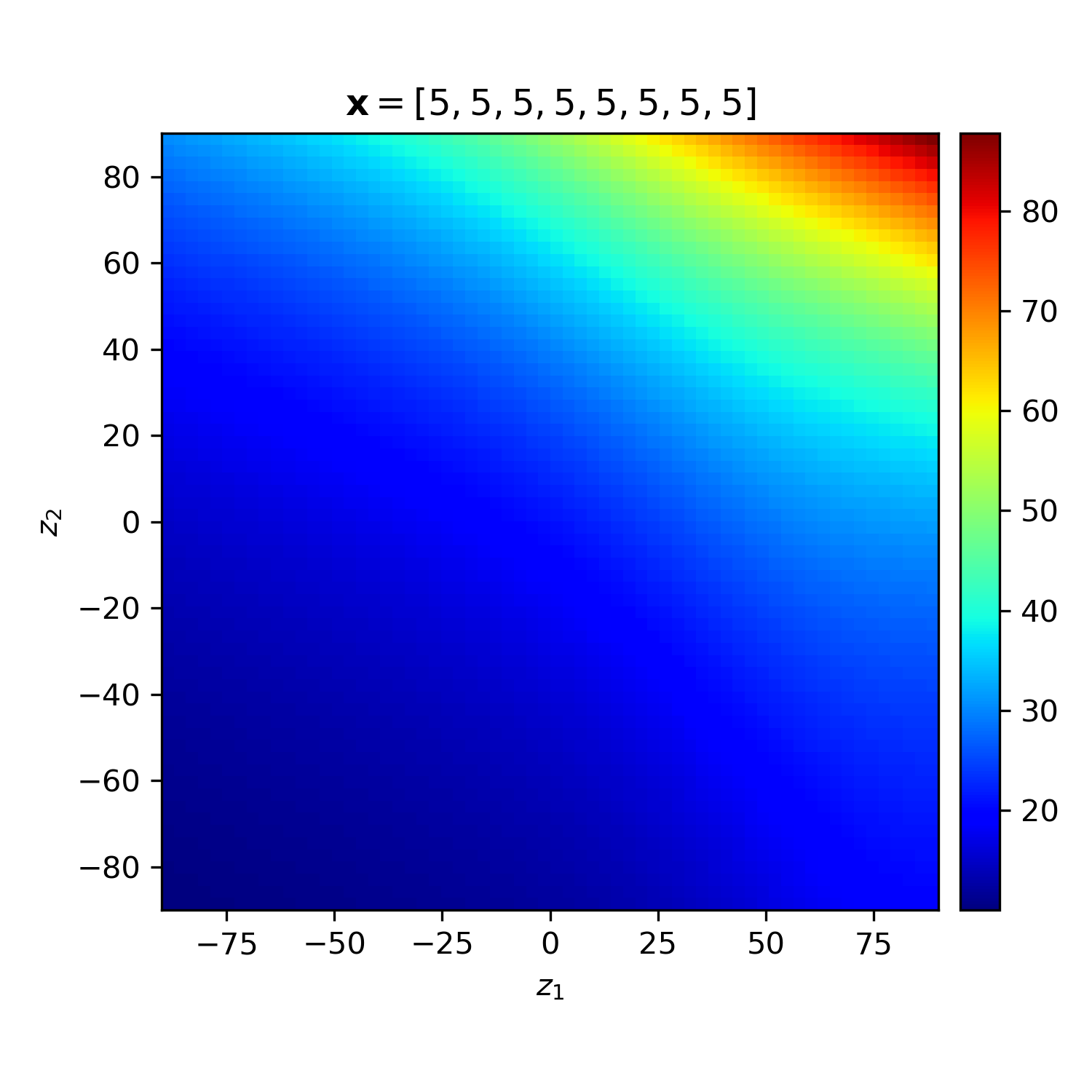}
        \caption{$\inputVector=$ [5,5,5,5,5,5,5,5].}
        \label{Fig.campbell_example_2}
    \end{subfigure}
    \hfill
    \begin{subfigure}[b]{0.32\textwidth}
        \centering
        \includegraphics[width=\textwidth]{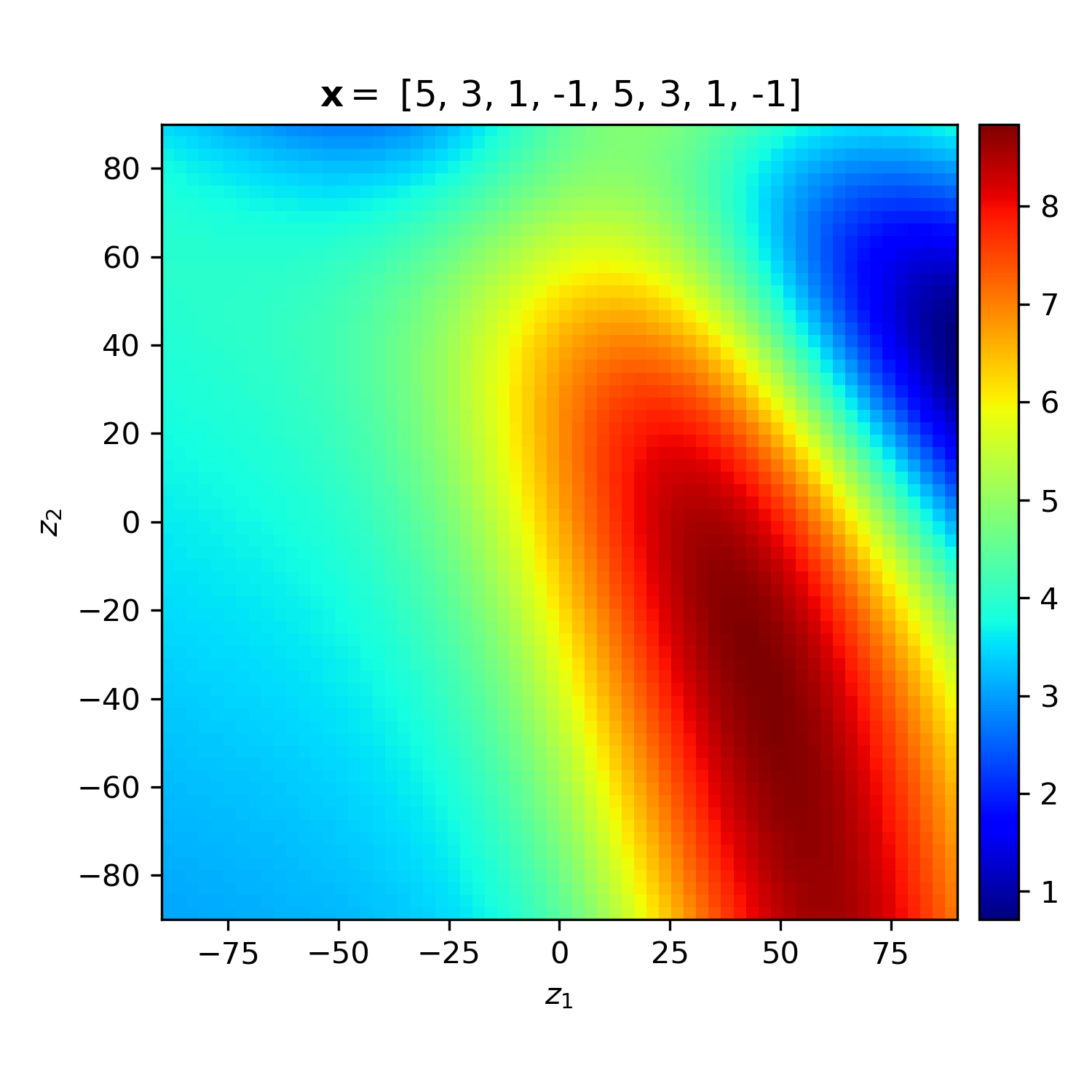}
        \caption{$\inputVector=$ [5,3,1,-1,5,3,1,-1].}
        \label{Fig.campbell_example_3}
    \end{subfigure}
    \caption{Examples of Campbell2D spatial outputs.}
    \label{Fig.campbell_examples}
\end{figure}

Using the Latin Hypercube Sampling (LHS) technique, a Design of Experiment (DoE) of size $200$ was generated considering uniform probability distributions for input variables. For the basis expansion, PCA was applied using $\nbasis = 7$ basis components, which account for $99.24 \%$ of the total variance. Then, each basis coefficient was metamodeled by a GPR using the Matérn 5/2 kernel and the ``L-BFGS-B'' optimizer for Maximum Likelihood Estimation. To evaluate the accuracy of the metamodels, the $Q^2$ metric (Eq. \ref{eq:q_squared}) was employed considering a validation set of size $50$. Figure \ref{Fig.q2_campbell} shows $Q^2 > 0.95$, which implies good predictions from the metamodels. Therefore, we can ensure reasonably accurate predictions for the basis coefficients using the trained metamodels.

\begin{figure}[H]
    \centering
        \includegraphics[width=0.85\textwidth]{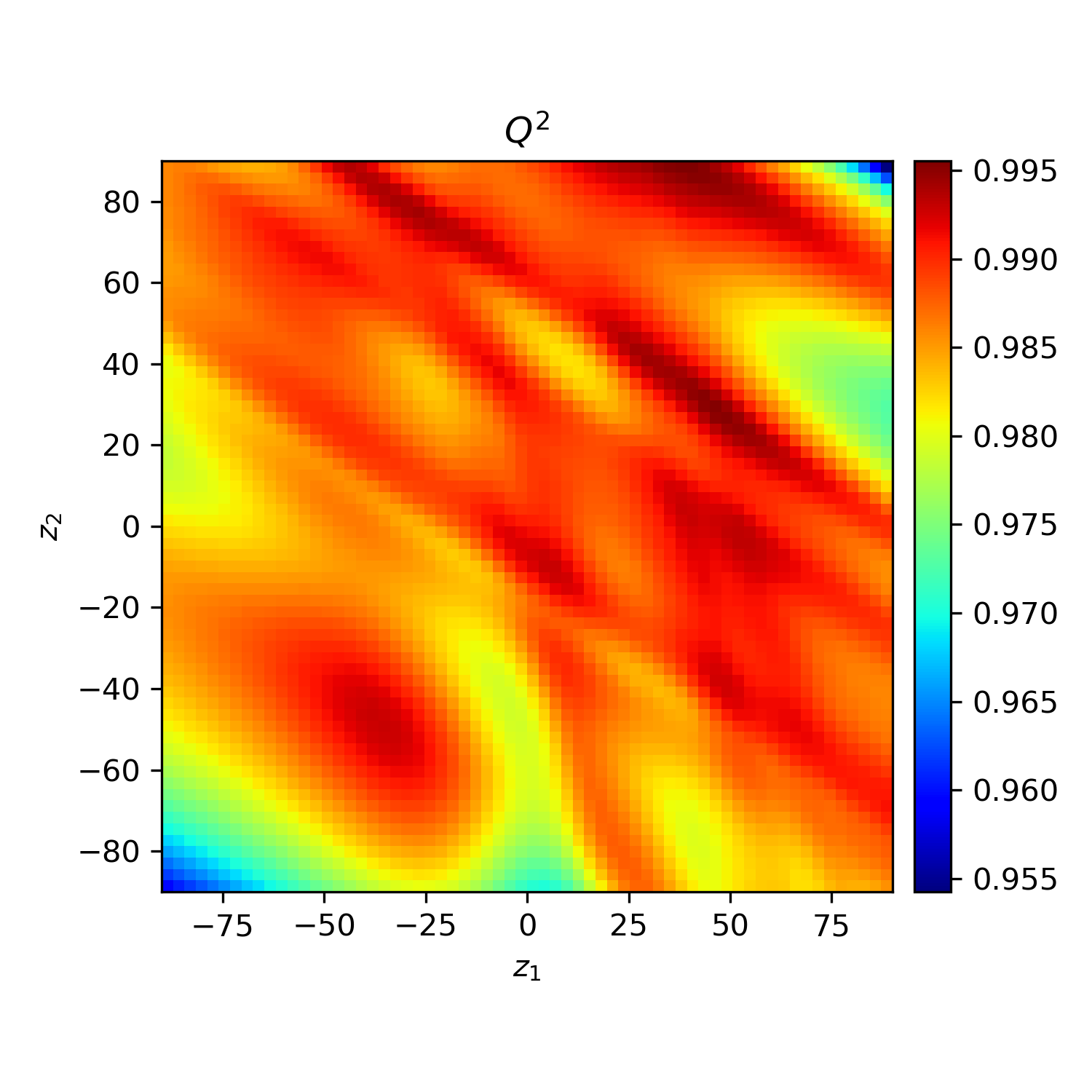}
    \caption{$Q^2$ metrics for evaluation of metamodels' accuracy.}
    \label{Fig.q2_campbell}
\end{figure}

To estimate the SMs, two methods were employed: the direct method and the \textit{basis-derived} method. For the direct method, the outputs $\outputValues(\inputVector)$ of Campbell2D function were calculated using the theoretical equation and applied directly in Definition \ref{Def.pick_freeze_scalar} for each output dimension. For the \textit{basis-derived} solution, the predictions of basis coefficients from GPR metamodels were generated and applied in Definition \ref{Def.Pick_freeze_vector}. Both of them were computed using PF samples of size $5\,000$. The bootstrap method was employed to evaluate the mean result and the correspondent standard deviation, using $50$ bootstrap replicates. Figures \ref{Fig.first_order_var2} and \ref{Fig.first_order_var6} show the first-order SMs for input variables $x_2$ and $x_6$, both referring to the direct and \textit{basis-derived} methods. 
In the same direction, Figs. \ref{Fig.total_order_var4} and \ref{Fig.total_order_var8} show the total order SMs for input variables $x_4$ and $x_8$, respectively. The results of the \textit{basis-derived} method present a good agreement with those of the direct method, with errors mostly ranging from $5$ to $15\%$. We can notice some zones with larger errors ($50 \%$ for Fig. \ref{Fig.first_order_var2} and $30 \%$ for Fig. \ref{Fig.first_order_var6}), associated with low-value Sobol' indices. Since we are considering the predicted SM with bootstrap mean, the error can be sensitive to the standard deviation around the mean due to the low-value indices.

\begin{figure}[H]
    \centering
    \begin{subfigure}[b]{0.32\textwidth}
        \centering
        \includegraphics[width=\textwidth]{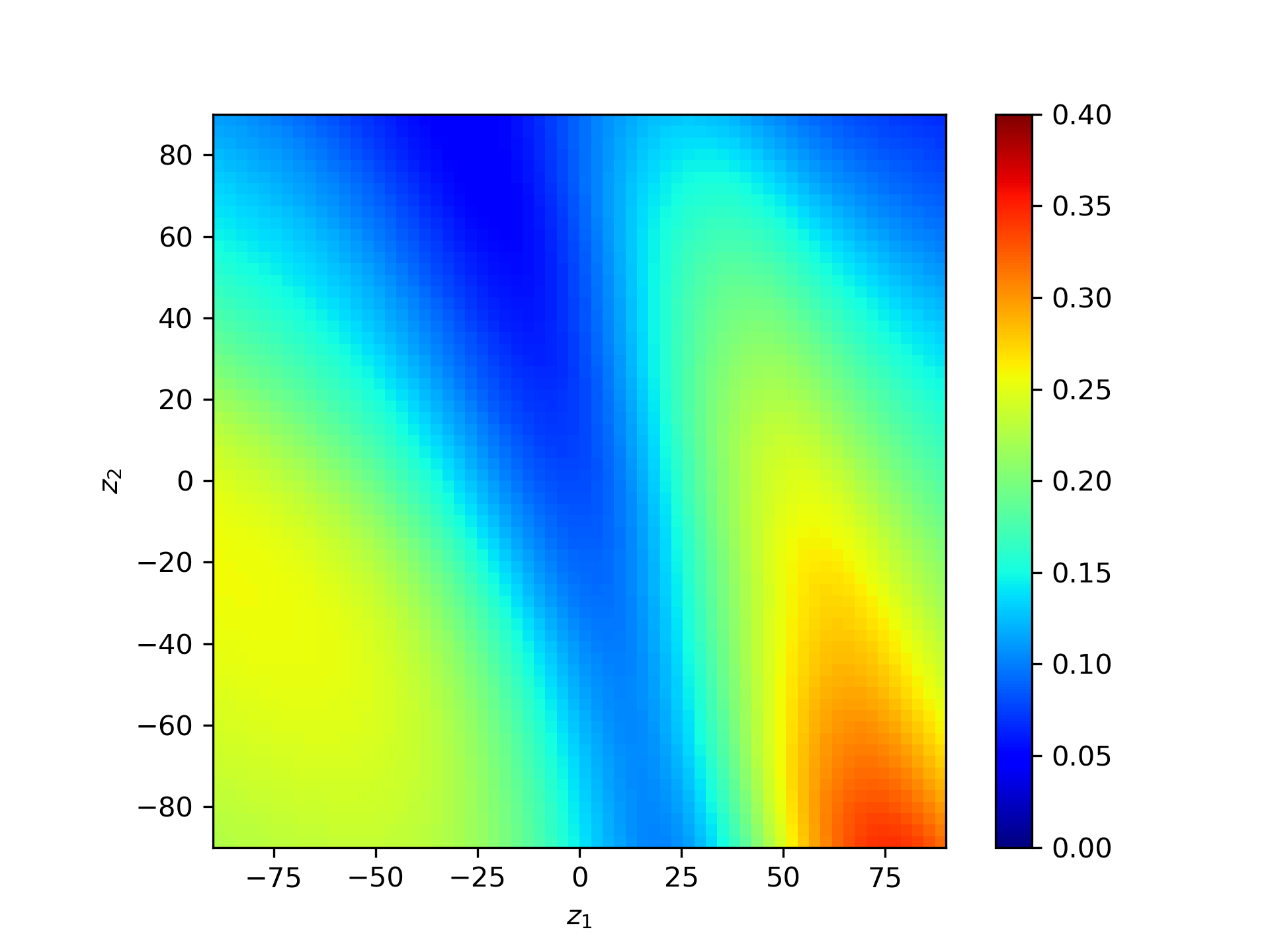}
        \caption{Direct method.}
    \end{subfigure}
    \hfill
    \begin{subfigure}[b]{0.32\textwidth}
        \centering
        \includegraphics[width=\textwidth]{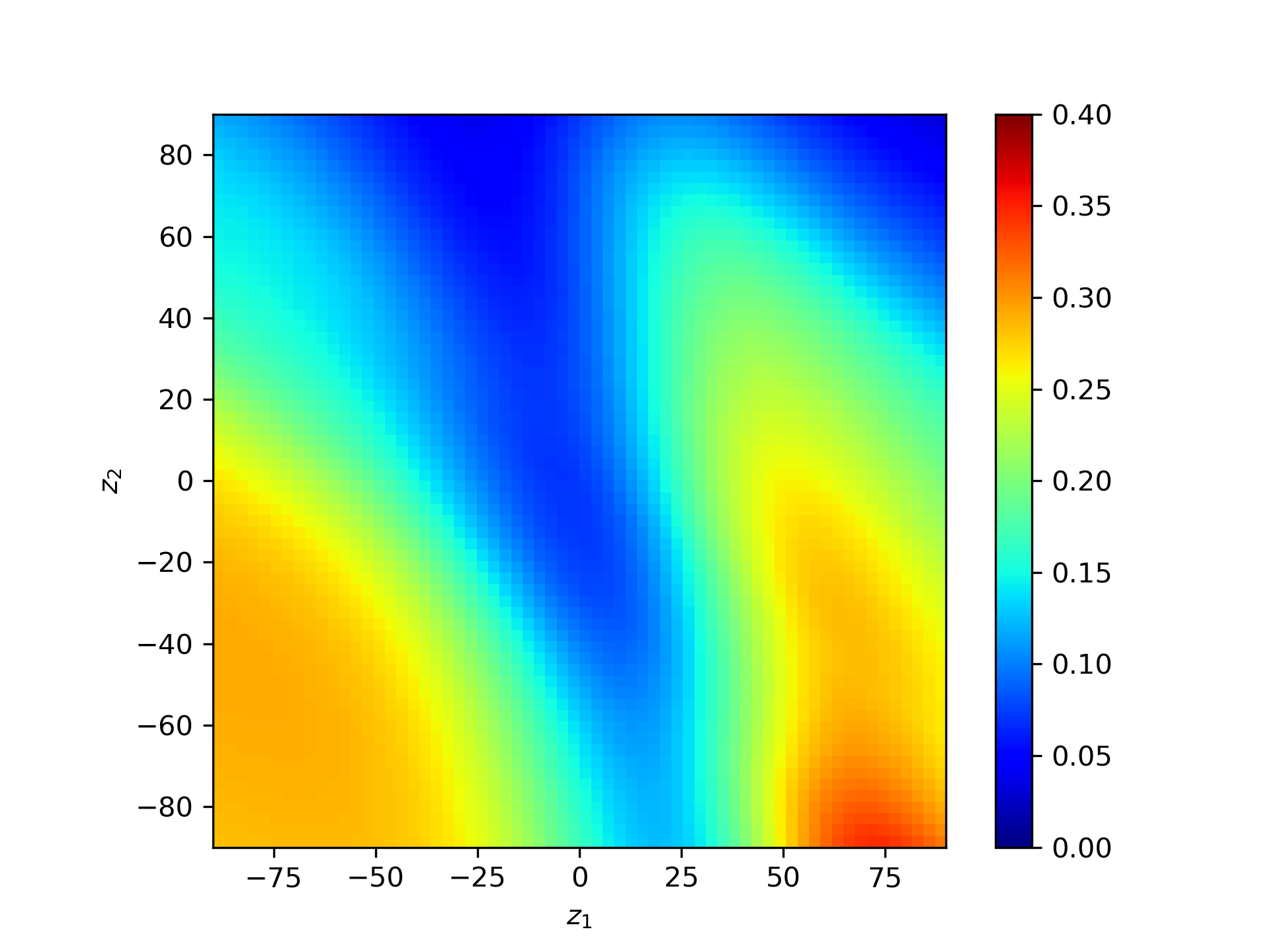}
        \caption{\textit{Basis-derived} method.}
    \end{subfigure}
    \hfill
    \begin{subfigure}[b]{0.32\textwidth}
        \centering
        \includegraphics[width=\textwidth]{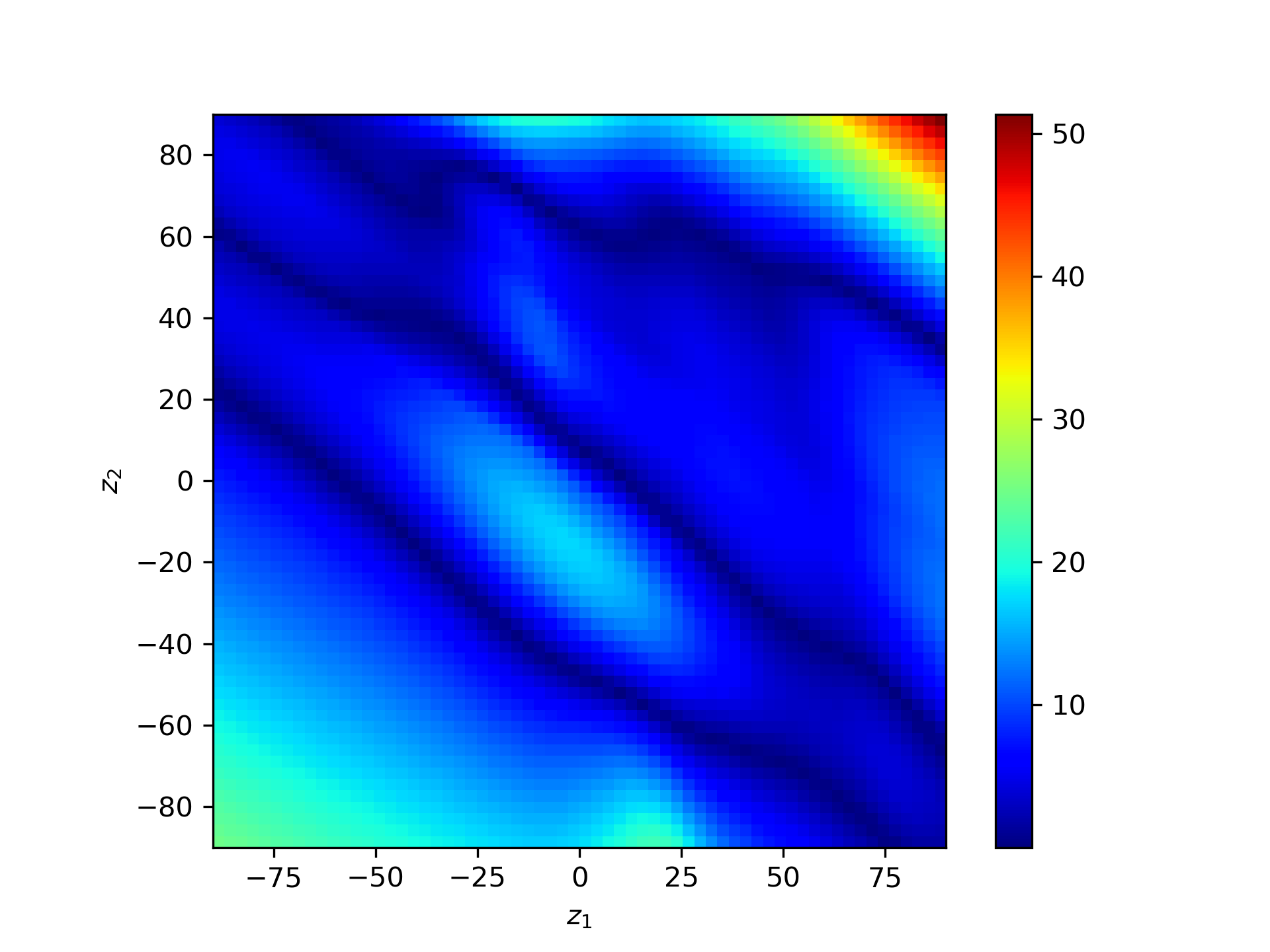}
        \caption{Relative errors.}
    \end{subfigure}
    \caption{Comparison of the first-order SMs for $x_2$ obtained with the direct and \textit{basis-derived} methods. In (c), the color scale values correspond to percentages of the direct method result represented in (a).}
    \label{Fig.first_order_var2}
\end{figure}

\begin{figure}[H]
    \centering
    \begin{subfigure}[b]{0.32\textwidth}
        \centering
        \includegraphics[width=\textwidth]{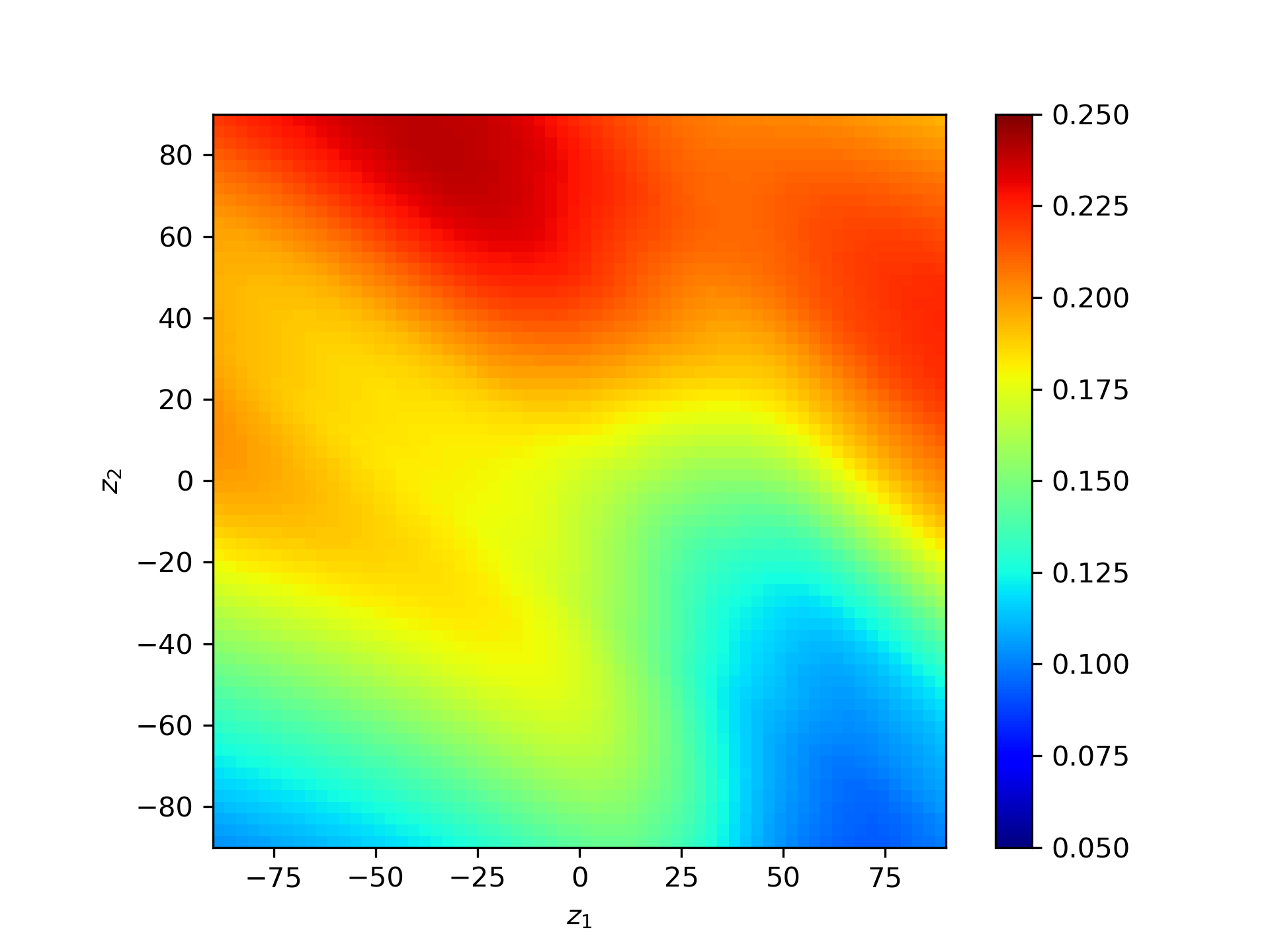}
        \caption{Direct method.}
    \end{subfigure}
    \hfill
    \begin{subfigure}[b]{0.32\textwidth}
        \centering
        \includegraphics[width=\textwidth]{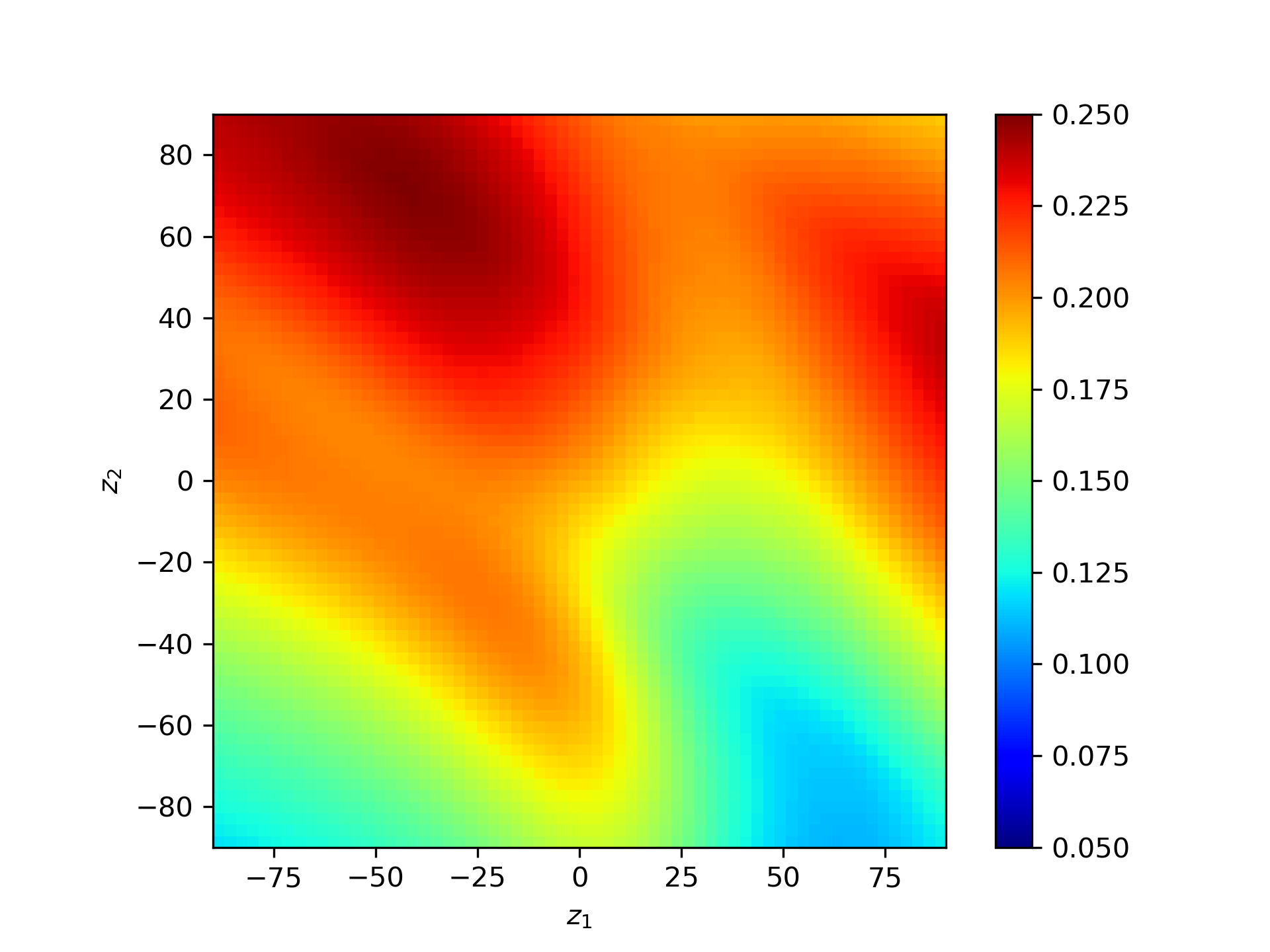}
        \caption{\textit{Basis-derived} method.}
    \end{subfigure}
    \hfill
    \begin{subfigure}[b]{0.32\textwidth}
        \centering
        \includegraphics[width=\textwidth]{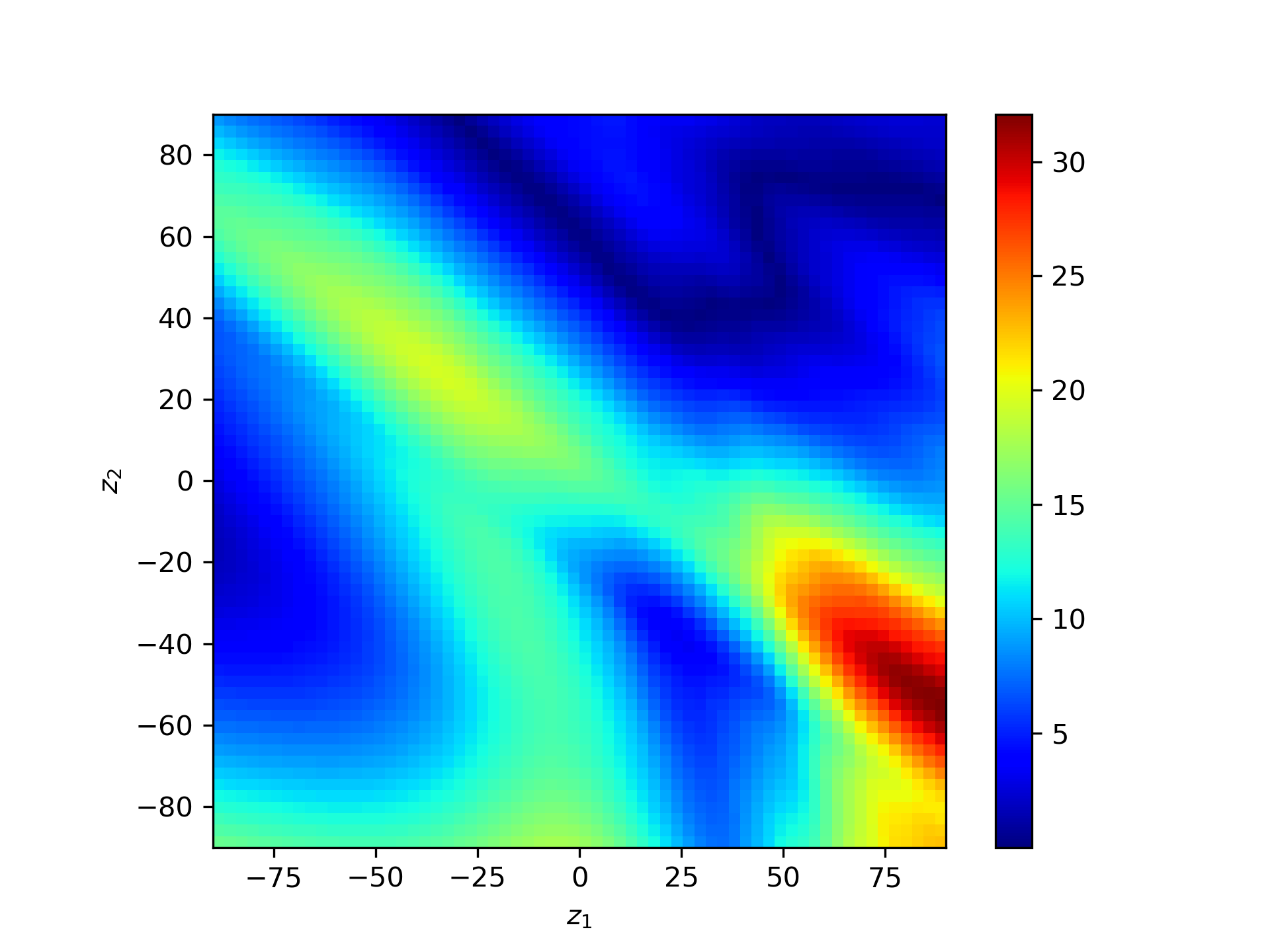}
        \caption{Relative errors.}
    \end{subfigure}
    \caption{Comparison of the first-order SMs for $x_6$ obtained with the direct and \textit{basis-derived} methods. In (c), the color scale values correspond to percentages of the direct method result represented in (a).}
    \label{Fig.first_order_var6}
\end{figure}

\begin{figure}[H]
    \centering
    \begin{subfigure}[b]{0.32\textwidth}
        \centering
        \includegraphics[width=\textwidth]{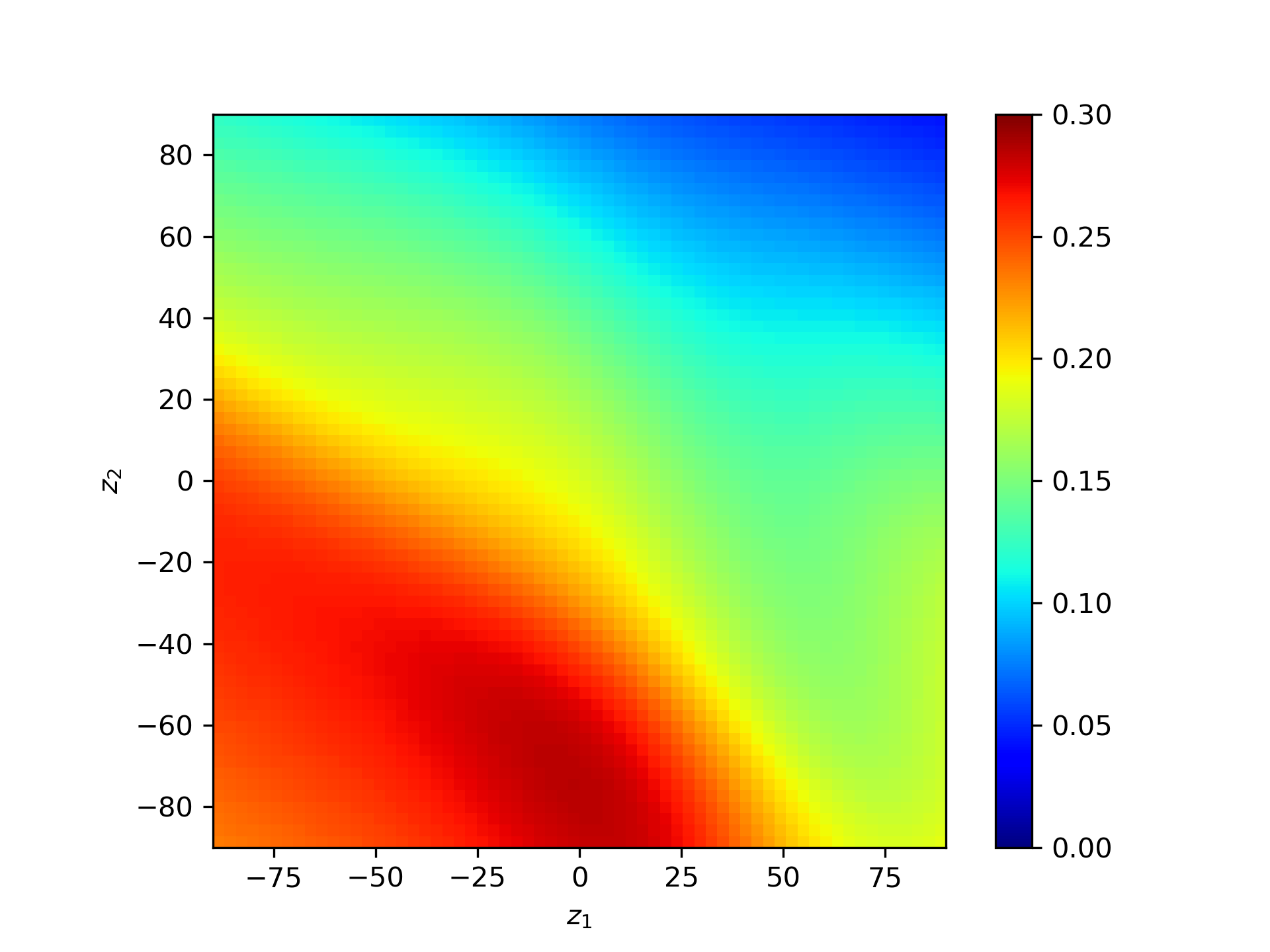}
        \caption{Direct method.}
    \end{subfigure}
    \hfill
    \begin{subfigure}[b]{0.32\textwidth}
        \centering
        \includegraphics[width=\textwidth]{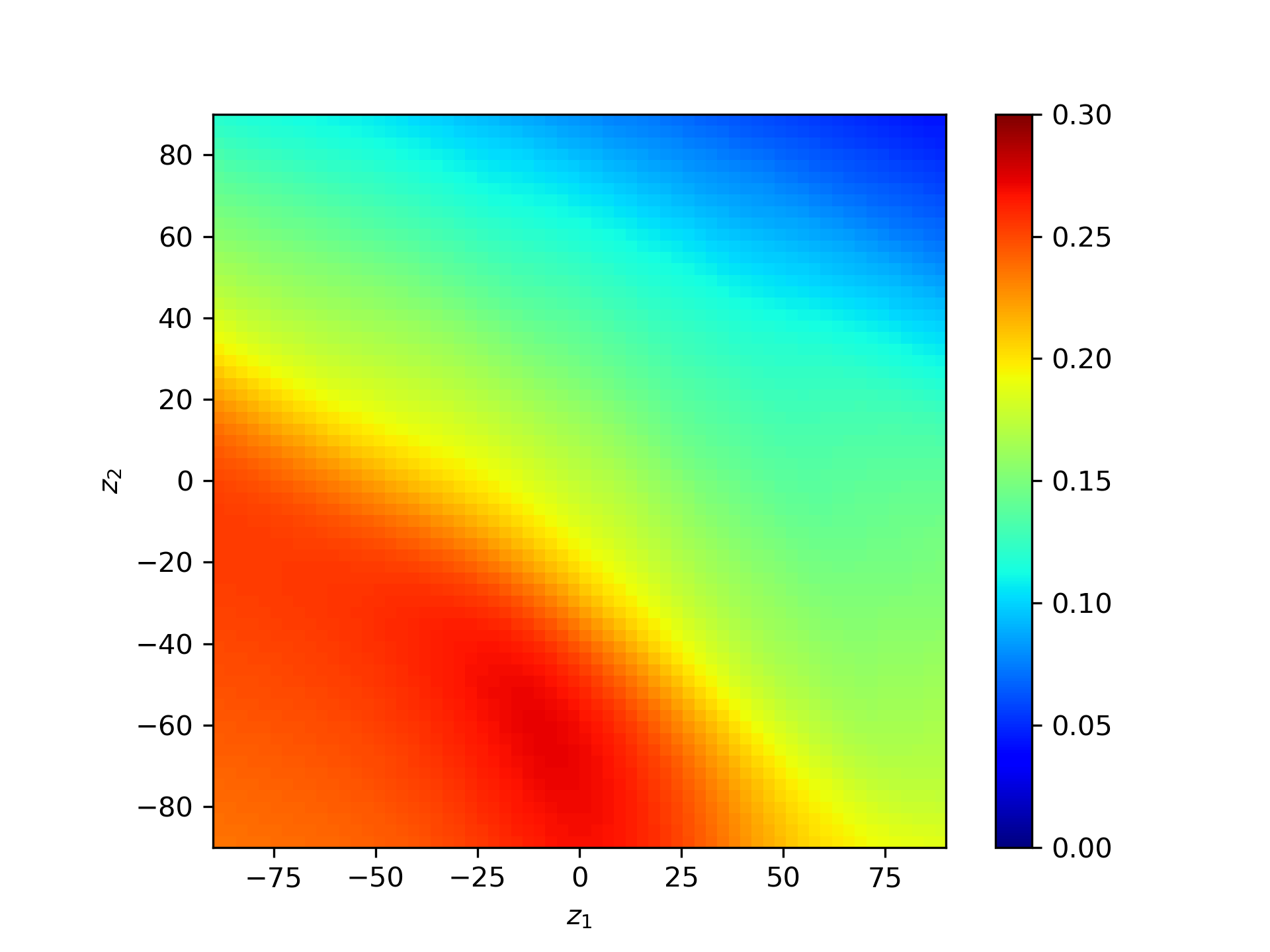}
        \caption{\textit{Basis-derived} method.}
    \end{subfigure}
    \hfill
    \begin{subfigure}[b]{0.32\textwidth}
        \centering
        \includegraphics[width=\textwidth]{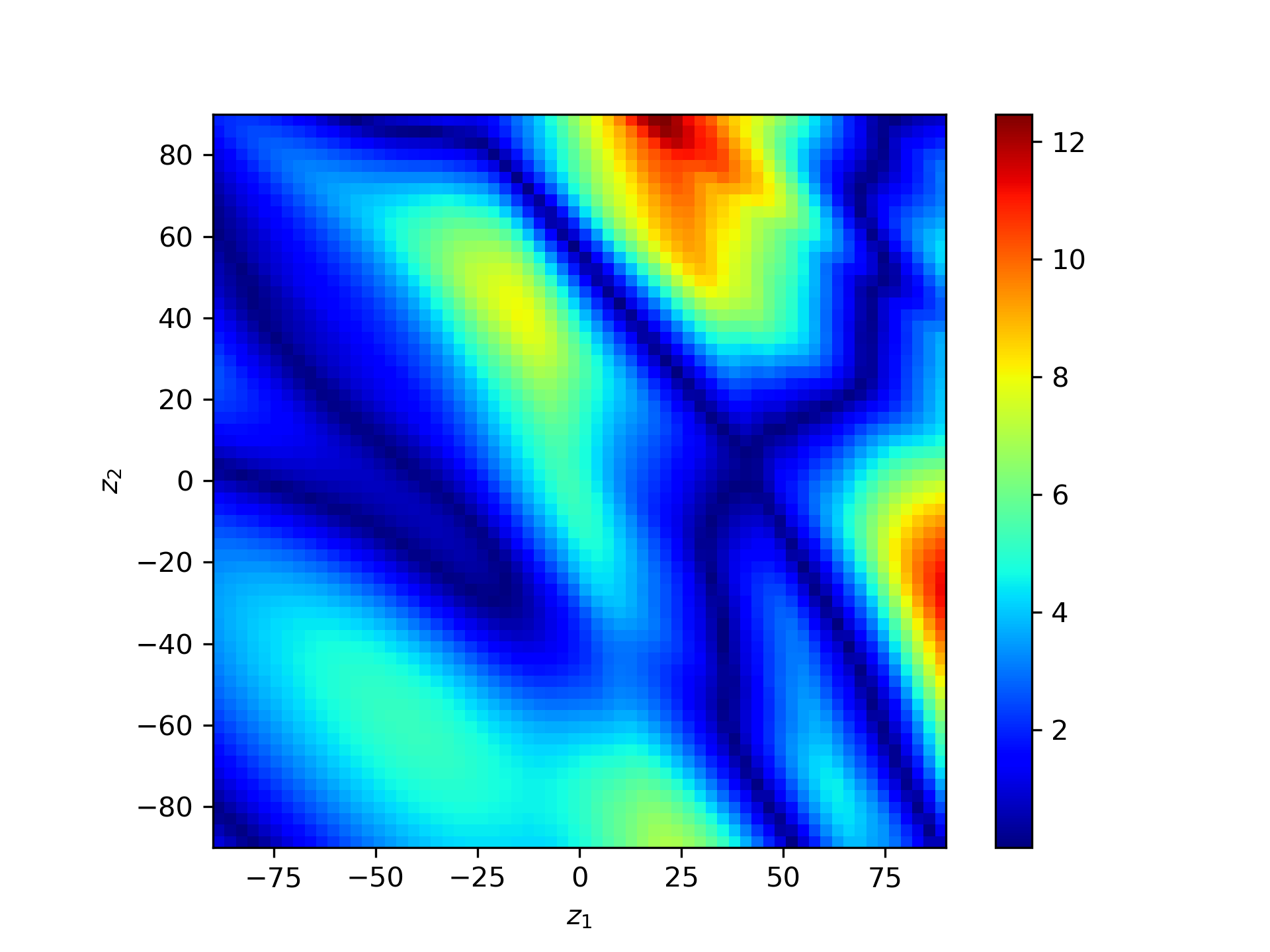}
        \caption{Relative errors.}
    \end{subfigure}
    \caption{Comparison of the total order SMs for $x_4$ obtained with the direct and \textit{basis-derived} methods. In (c), the color scale values correspond to percentages of the direct method result represented in (a).}
    \label{Fig.total_order_var4}
\end{figure}

\begin{figure}[H]
    \centering
    \begin{subfigure}[b]{0.32\textwidth}
        \centering
        \includegraphics[width=\textwidth]{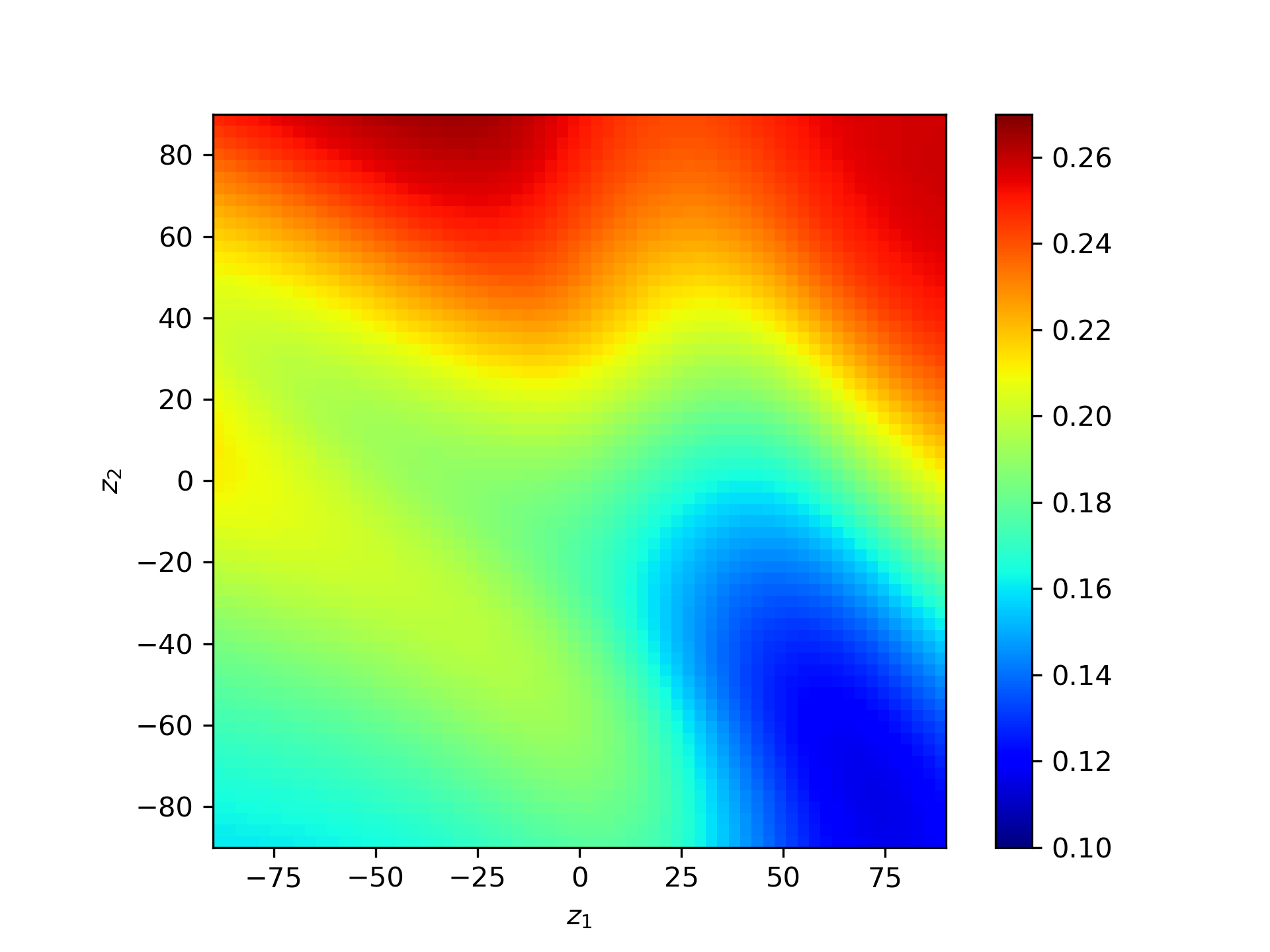}
        \caption{Direct method.}
    \end{subfigure}
    \hfill
    \begin{subfigure}[b]{0.32\textwidth}
        \centering
        \includegraphics[width=\textwidth]{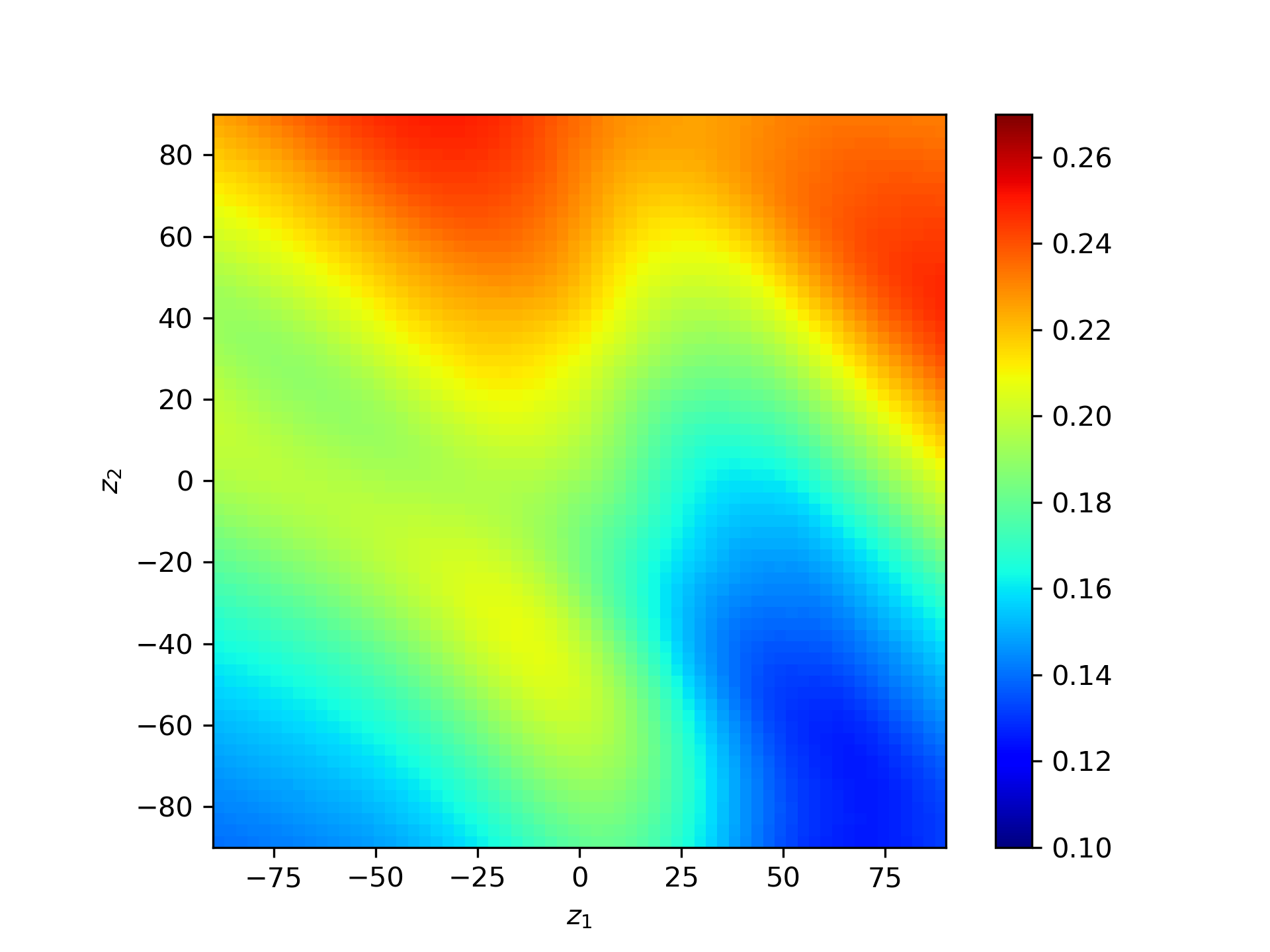}
        \caption{\textit{Basis-derived} method.}
    \end{subfigure}
    \hfill
    \begin{subfigure}[b]{0.32\textwidth}
        \centering
        \includegraphics[width=\textwidth]{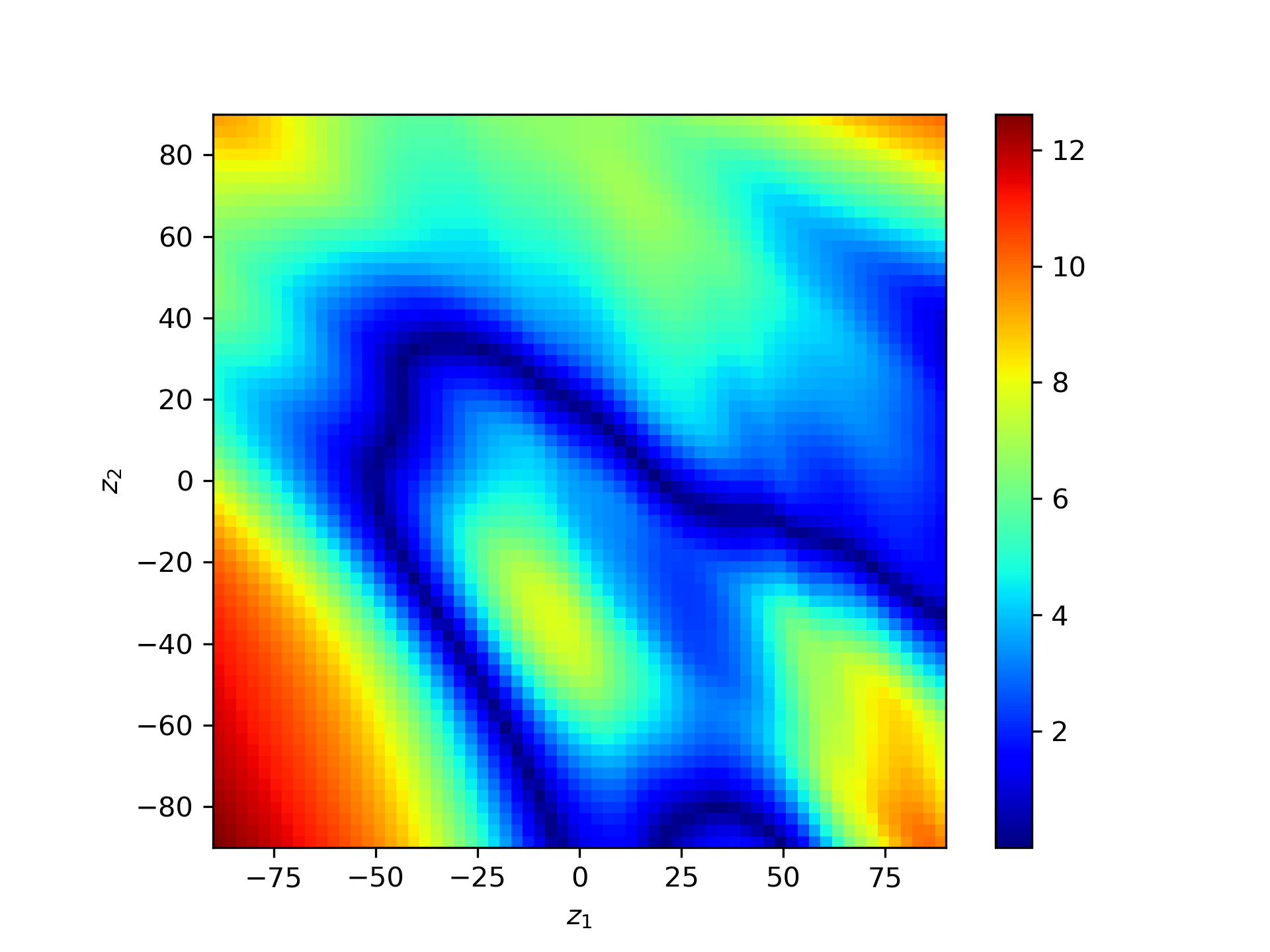}
        \caption{Relative errors.}
    \end{subfigure}
    \caption{Comparison of the total order SMs for $x_8$ obtained with the direct and \textit{basis-derived} methods. In (c), the color scale values correspond to percentages of the direct method result represented in (a).}
    \label{Fig.total_order_var8}
\end{figure}

To summarize the contribution of each input variable over the variance of outputs, the Generalized Sensitivity Index (GSI) can also be calculated (Eq. \ref{eq:gsi}). Figure \ref{Fig.boxplot_campbell} shows the first-order and total indices, where the bootstrap method was also employed. Averaging through all the output domain, it is possible to make some remarks. First, the variables $x_6$ and $x_8$ are the most influential due to their high total index. Second, $x_3$ and $x_5$ are influential just while interacting with other variables; otherwise their main effects are negligible. On the other hand, $x_2$, $x_4$, $x_7$ and $x_8$ are defined almost completely by their main effect. Finally, $x_1$ is not influential, neither in its main effect nor in its interactions with other variables. The same results were obtained by \citet{Perrin2021}.

\begin{figure}[H]
    \centering
    \hfill
        \centering
        \includegraphics[width=\textwidth]{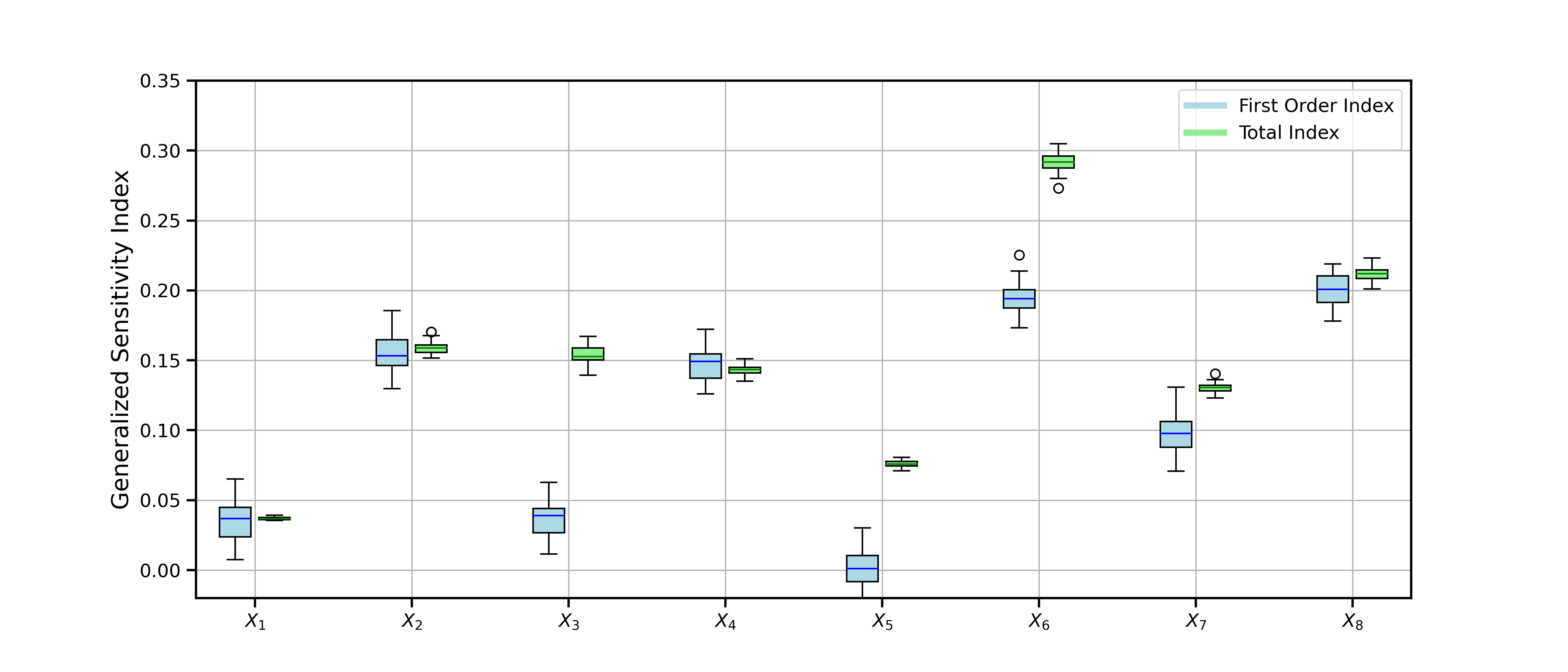}
    \caption{Generalized Sensitivity Indices of input variables.}
    \label{Fig.boxplot_campbell}
\end{figure}

Using Proposition~\ref{prop:ratio_costs} considering the given parameters ($\numberPickFreezeSamples = 5\,000$, $\nbasis = 7$ and $\nOutputDimensions = 4096$), the theoretical gain is $\mathrm{\costDW}/\mathrm{\costBD} = 330$, i.e. the \textit{basis-derived} approach is approximately $330$ times faster than the \textit{dimension-wise} approach. The practical computational gain was measured and is approximately $30$ times faster for the \textit{basis-derived} approach. Two factors can explain this 10-fold difference, especially those related to the implementation of the code: the specificity of the coding language and the hardware. About coding, some operations were executed in native Python and others in functions based on lower-level and more efficient languages (such as C and FORTRAN), which can degrade performance. For example, the ``decoding'' procedure in the \textit{dimension-wise} approach is supposed to be, in theory, the most dominant term in $\mathrm{\costDW}$ (cost of $4\nbasis\numberPickFreezeSamples\nOutputDimensions$, compared to the pick-freeze cost of $8 \numberPickFreezeSamples \nOutputDimensions$). However, using a highly efficient scalar product function (numpy.dot), it corresponds to half of the practical $\mathrm{\costDW}$ at maximum. Since the other terms cannot perform as efficiently, some disparities of practice with theory may occur. About the hardware, 
the simulations were performed in two setups: 1) AMD Ryzen 7 6800H 3.2 GHz with 16 logical processors; and 2) Apple M3 with 8 cores (used in all simulations). The first setup provided a cost ratio
2 times greater (approximately $60$) than the second setup, although the second setup was faster in absolute computational time. It shows that the hardware can also affect the practical computational costs.

\subsection{Idealized gradual dam-break flows of non-Newtonian fluids}
\label{Subsec.dam_break}

This section presents the GSA study of the idealized case of gradual dam-break flow of non-Newtonian fluids. An idealized gradual dam-break flow consists of a known volume of material inside a reservoir delimited by the walls and a gate, which is lifted with a finite velocity and the material flows downstream a horizontal plane or channel (Fig. \ref{Fig.schematic}) \citep{Matson2007, Ancey2009, Liu2016}. 

\begin{figure}[H] 
\centering
\includegraphics[width=1.\textwidth]{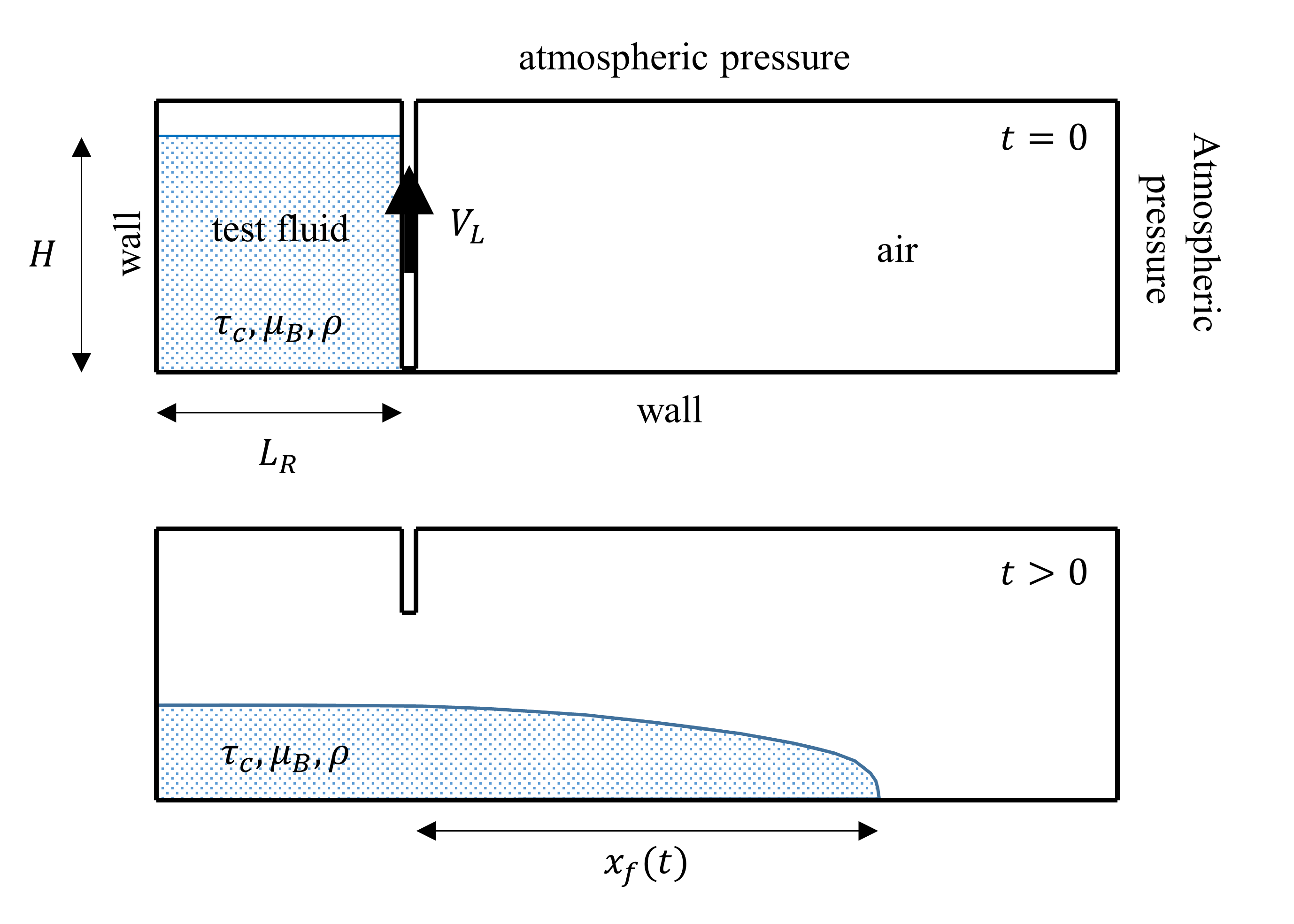}
\caption{Schematic of the idealized gradual dam-break case with boundary conditions and input variables.}\label{Fig.schematic}
\end{figure}

This application is relevant to many areas of engineering in the context of evaluating rheological properties and physical characteristics of non-Newtonian materials, such as fresh concrete, mud, gels, food, etc. We refer to \citet{Chhabra2011, Irgens2014, Balmforth2014} for further details of non-Newtonian behavior of fluids. While rheological properties are usually assessed by equipment called ``rheometers'', alternative techniques have been developed and studied, providing acceptable results with less costs \citep{Clayton2003, Pereira2021, Sao2021, Gao2015flume, Balmforth2007}. For each technique, particular geometric characteristics (e.g. maximum distance and/or maximum fluid height) are related to rheological properties through empirical correlations or analytical models \citep{Pashias1996, Pereira2022}. In this context, the case presented herein is important since it consists on the physical basis of relevant alternative rheometry techniques, such as the ``slump test'' \citep{Clayton2003, Pereira2021, Pereira2022}, ``mini-slump test'' \citep{Pashias1996, Gao2015mini} and the ``Bostwick consistometer'' \citep{Balmforth2007, Minussi2012}. 

We aim to apply the contribution of this work to study the influence of input variables (initial height, fluid properties and lifting dynamics) of the problem over a chosen quantity of interest: the position of the wavefront over time. In that way, we can obtain sensitivity indices along the entire time series, allowing us to have insights about the phenomena considering lifting dynamics, fluid characteristics and initial geometry.

\subsubsection{Data acquisition}

Data obtained in this work was produced by 2D numerical simulations using ANSYS Fluent r20.0, which employs the Finite Volume Method to discretize and solve the system of equations of continuity and momentum from fluid mechanics. Figure \ref{Fig.schematic} shows the computational domain and boundary conditions. Structured quadrangular meshes with $\Delta x = 5 \times 10^{-4}$ $m$ of resolution were used and the Layering technique was applied to address the moving boundary (lifting gate) \citep{Gao2015mini, Pereira2022}. The multiphasic model Volume of Fluid (VoF) was used to model different phases by introducing the variable Volume Fraction $\alpha_{(\cdot)}$ ($\alpha_1=0$ for air, $\alpha_2=1$ for test fluid, where $\alpha_1 + \alpha_2 = 1$) and to track the free surface of the test fluid ($0 < \alpha_2 < 1$) \citep{Gopala2008}. To model the non-Newtonian fluid rheology, the regularized Bingham model was employed \citep{Bird1983, Balmforth2014, Gao2015mini}, whose constitutive equation in apparent viscosity $\eta$ form is given by:

\begin{equation}
    \eta = 
    \begin{cases}
        \frac{\tau_c}{\dot{\gamma}_c} \left( 2 - \frac{\dot{\gamma}}{\dot{\gamma}_c}\right) + \mu_B & \text{if } \dot{\gamma} \leq \dot{\gamma}_c \\
        \frac{\tau_c}{\dot{\gamma}} + \mu_B & \text{if } \dot{\gamma} > \dot{\gamma}_c
    \end{cases}
\end{equation}

where $\tau_c$ is the yield stress, $\mu_B$ is the plastic viscosity, $\dot{\gamma}$ is the shear rate and $\dot{\gamma}_c$ is the critical shear rate. We considered $\dot{\gamma}_c = 10^{-3}$ $s^{-1}$, which is sufficient to regularize the viscosity \citep{Gao2015mini, Pereira2022}. 

Five input variables were considered: rheological properties $\tau_c$ and $\mu_B$, density $\rho$, initial height $H$ and lifting velocity $V_L$. The variables were considered uniformly distributed as shown in Tab. \ref{tab.prob_distributions}. The minimum and maximum values of rheological properties and density are based on the literature \citep{Obrien1988, Major1992, Sosio2007, Blight2009, Sarsby2000}. For lifting dynamics and initial height, we explored very low/high velocities and low/high aspect ratios given the fixed unitary volume of $0.26$ $m^2$ and the prototypical dimensions of the domain.  

\begin{table}[h!]
    \centering
    \begin{tabular}{|c|c|}
        \hline
        Variable & Distribution \\
        \hline
        $\tau_c$ & $\mathcal{U}(0.1 , 200)$ \text{Pa} \\
        \hline
        $\mu_B$ & $\mathcal{U}(0.01 , 15)$ \text{Pa.s} \\
        \hline
        $\rho$ & $\mathcal{U}(1000 , 2650)$ \text{Kg.m\textsuperscript{-3}} \\
        \hline
        $V_L$ & $\mathcal{U}(0.01 , 1)$ \text{m.s\textsuperscript{-1}} \\
        \hline
        $H$ & $\mathcal{U}(0.2 , 1)$ \text{m} \\
        \hline
    \end{tabular}
    \caption{Input variables and their distributions.}
    \label{tab.prob_distributions}
\end{table}

A pure Monte-Carlo approach was used to generate an initial DoE of size $130$. Then, the DoE was enriched with a Latin Hypercube sample, producing a final sample of size $226$. The sample size was limited by the computational cost of each simulation. The simulations were performed and the quantity of interest evaluated was the wavefront position (runout) over time, or $x_f \times t$. Each output is composed of a sequence of $x_f$ data points irregularly spread over time. 
To make this sequence regular, a linear interpolation was applied to each output. Finally, $226$ curves were produced, formed by regular sequences of $5\,000$ interpolated points over time. Therefore, in this case, the number of output dimensions is $\nOutputDimensions=5\,000$.

\subsubsection{Results and discussion}

The same procedure applied in Subsec. \ref{Subsec.campbell2d} was employed herein: outputs of the DoE were expanded through PCA ($\nbasis = 10$ accounting for more then $99.9 \%$ of the total variance) and the resulting basis coefficients were metamodeled using GPR. To evaluate the accuracy of the predictions provided by metamodels, we defined $11$ DoEs ($100 \leq n_T \leq 200$, in steps of $10$) and calculated the respective $Q^2$ metrics (Eq. \ref{eq:q_squared}), as Figure \ref{Fig.dambreak_q2} shows. The curves refer to the bootstrap median (20 bootstrap replicates) and only the most accurate set of metamodels ($n_T = 200$) presents the 1st and 3rd quartiles area, for visualization reason. It is observable that the $Q^2$ score increases by increasing the size of the DoE, until approximately $n_T = 170$. For $n_T > 170$, the $Q^2$ score curves remain without significant variation. These score curves present $Q^2 \approx 0.8$ in early instants of time because of large variation of data in these instants; then, they increase to $Q^2 \approx 0.92$, showing these values during the entire remaining time series. We consider it as an acceptable prediction score and we chose $n_T = 200$ to carry out the analysis.

\begin{figure}[H]
\centering 
\includegraphics[width=1.1\textwidth]{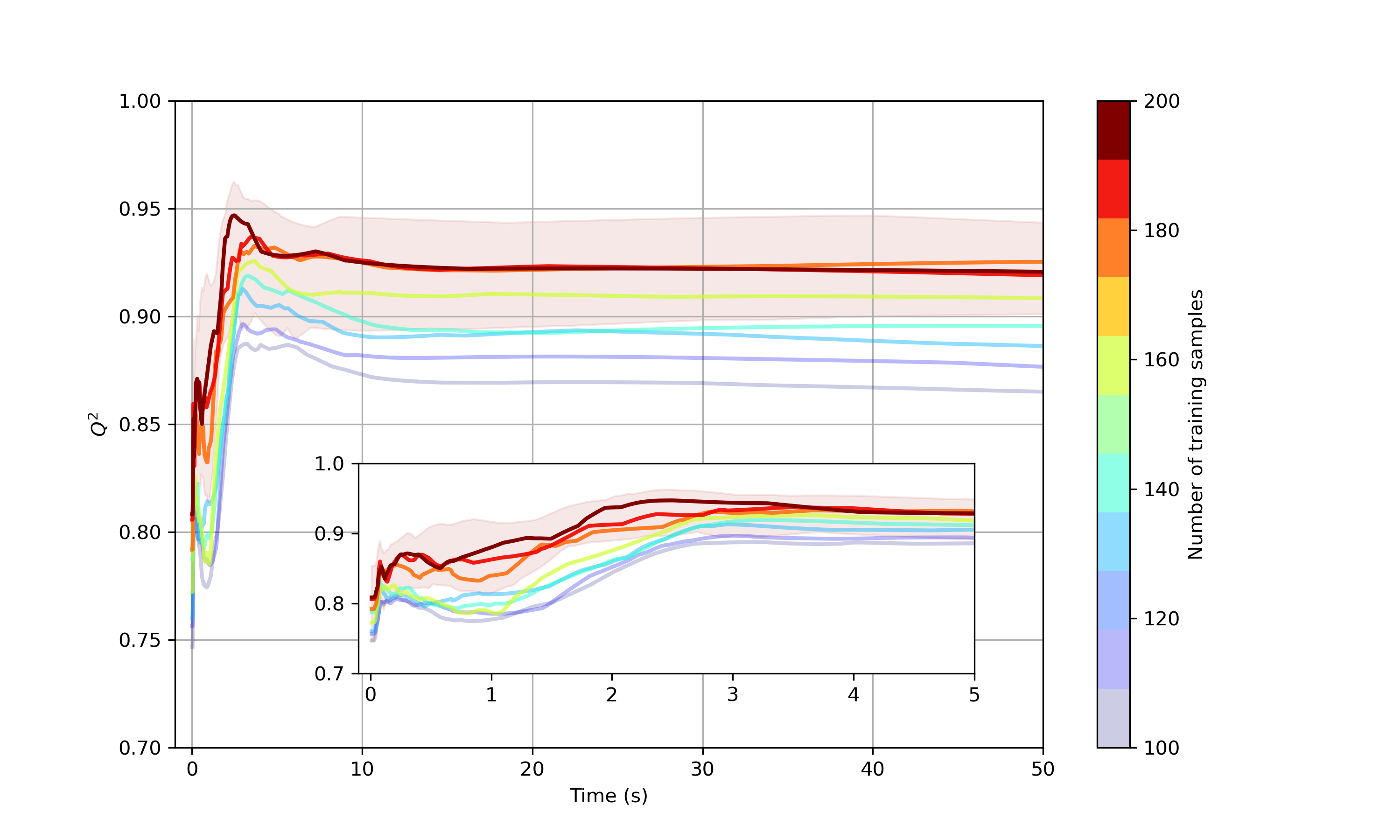}

\caption{Accuracy of the prediction according to the $Q^2$ metric. The curves correspond to the bootstrap median and the shaded region is the area between the 1st and 3rd quartiles of the bootstrapping set (20 bootstrap replicates). A zoomed plot from times $t=0$ and $t=5$ $s$ is presented for better visualization of initial time instants. For visualization reasons, the shaded region is shown just for the DoE of size $200$.}\label{Fig.dambreak_q2}
\end{figure}

For the GSA, PF samples of size $5\,000$ were generated and the Sobol' indices were computed using Definition \ref{Def.Pick_freeze_vector}, i.e. using the \textit{basis-derived} approach. The bootstrap method was used to quantify the uncertainty related to the PF sample size. For this, $50$ replicates were used, allowing us to obtain the median and standard deviation of each SM. Figure \ref{Fig.dambreak_sobol} shows the entire time series of first-order (dashed lines) and total Sobol' indices (continuous lines) for each input variable over the runout $x_f$. It is globally observed that the plastic viscosity $\mu_B$ is most influential variable in most of the time series, followed by the yield stress $\tau_c$, initial height $H$, density $\rho$ and finally the lifting velocity $V_L$. It shows that the rheological properties of the fluid are, in fact, important for the runout, which shows the link between the dam-break flow and estimation of rheological properties.

\begin{figure}[H]
\centering
\includegraphics[width=1.1\textwidth]{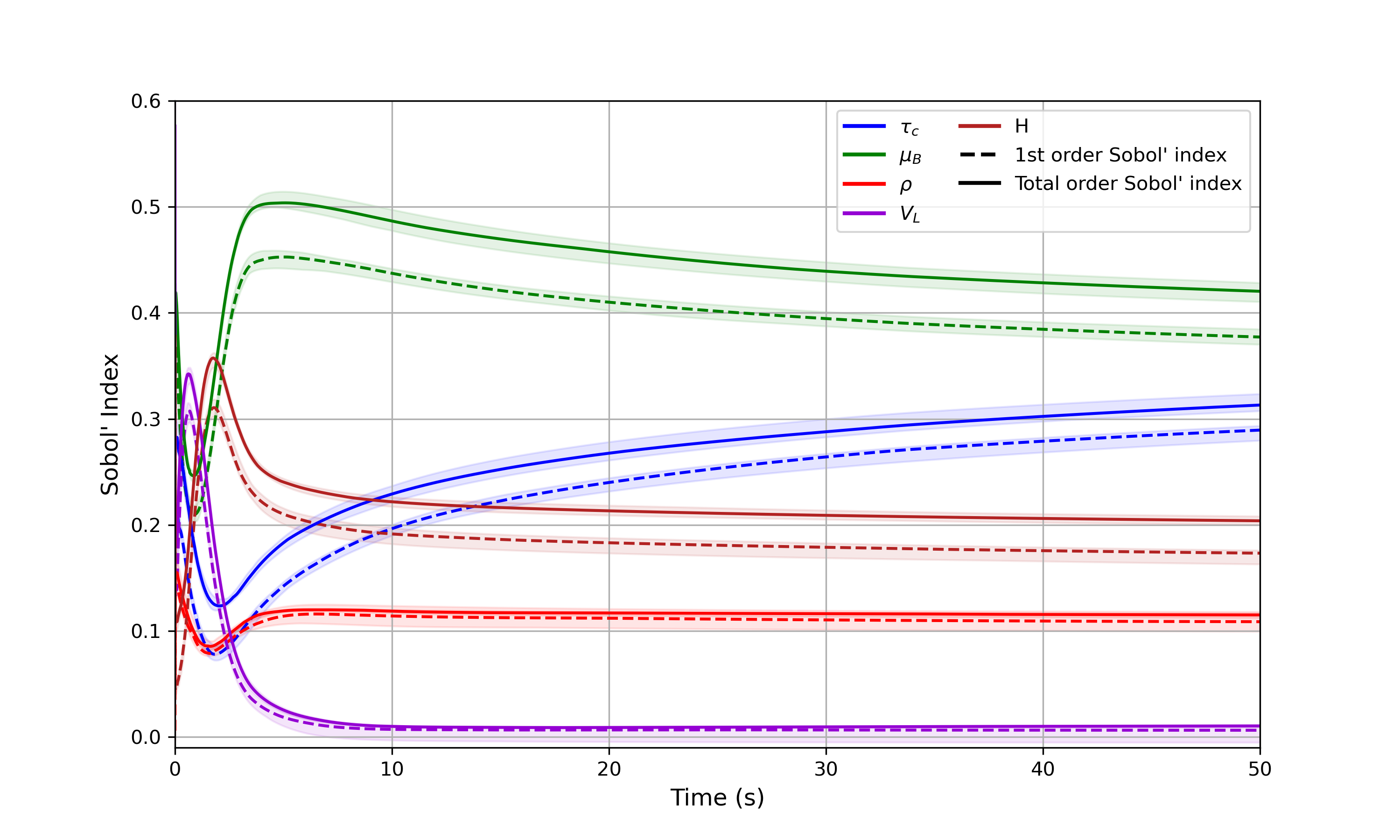}

\caption{Total and first-order SMs of all input variables. Total Sobol' indices are represented by solid lines, whereas first-order Sobol' indices are represented by dashed lines. The estimation error is assessed by boostrap with $50$ replicates: lines correspond to the mean and shaded regions to the first and third bootstrap quartiles.}

\label{Fig.dambreak_sobol}
\end{figure}

However, it is clear that the sensitivity of each input variable depends on time: in the beginning of the time series, $V_L$ and $H$ are more influential than in the end of the time series; conversely, $\tau_c$ becomes increasingly more influential later in the time series. Evaluating $\tau_c$ considering $t \rightarrow \infty$, its increasing influence is coherent with non-Newtonian fluid mechanics, since in steady regime the material remains static and the yield stress is perfectly counterbalanced by hydrostatic pressure \citep{Pashias1996, Gao2015flume, Pereira2021}. It means that the equilibrium position depends strongly on the yield stress, which is corroborated by our analysis.

To evaluate the initial times, Fig. \ref{Fig.dambreak_sobol_zoomed} shows the Sobol' indices for $0 < t < 5$ $s$. The behavior of $V_L$ and $H$ makes physical sense. For $V_L$, a fast lifting of the gate releases more material, which ultimately affects the runout position $x_f$ in the beginning of the time series; in later instants, this effect is counter-balanced by viscous forces ($\mu_B$ and $\tau_c$) and $V_L$ influence disappears. For $H$, a higher value means more gravitational potential energy, also affecting $x_f$, but its influence in later instants is only partially counter-balanced by viscous forces.

\begin{figure}[H]
\centering
\includegraphics[width=1.\textwidth]{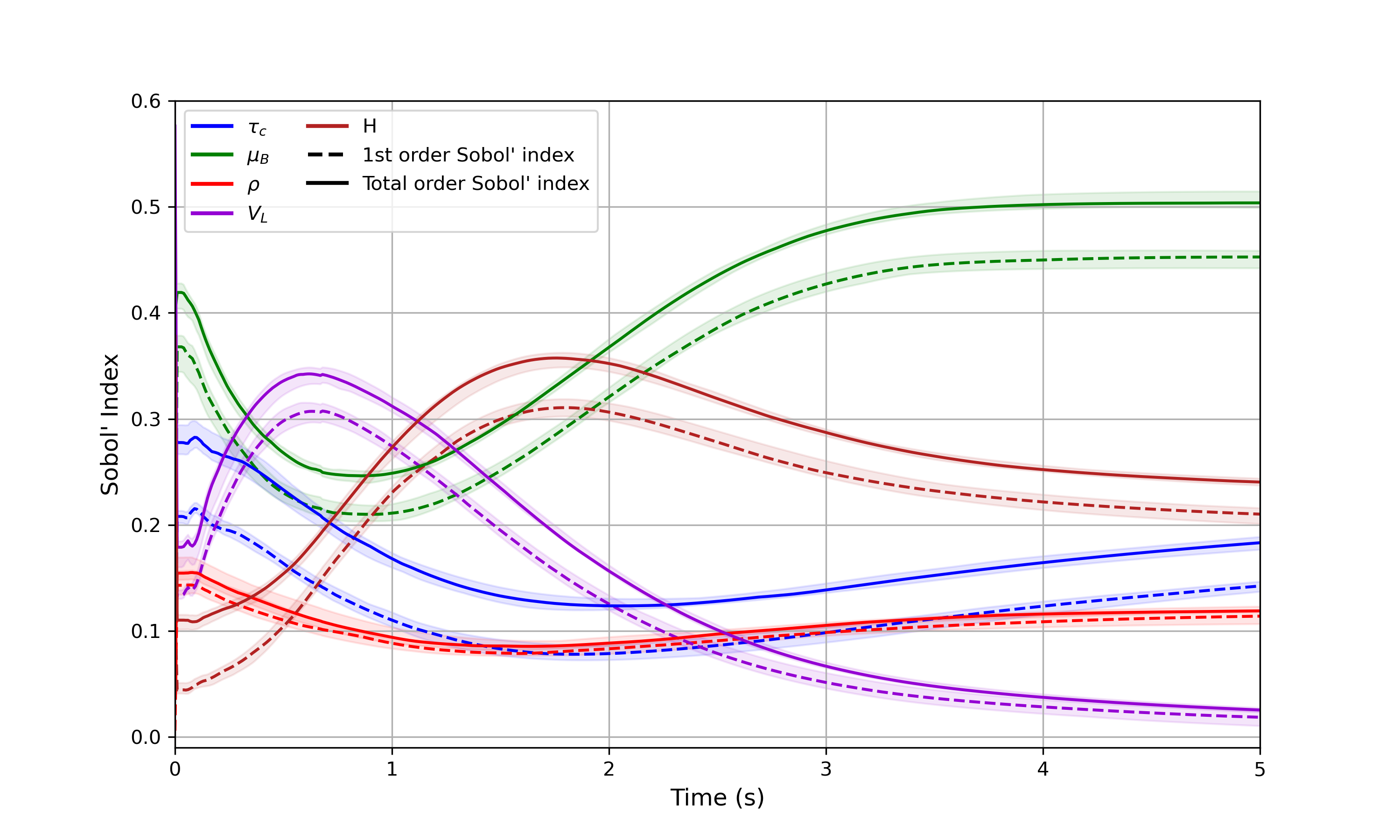}
\caption{A zoomed plot of Fig. \ref{Fig.dambreak_sobol} on the initial time instants of the time series.}
\label{Fig.dambreak_sobol_zoomed}
\end{figure}

The global influence of each input variable can be described by the Generalized Sensitivity Indices (GSI) using Eq. \ref{eq:gsi}, as Fig. \ref{Fig.dambreak_boxplot} shows. The GSI values confirm that, indeed, $\mu_B$ and $\tau_c$ are the most influential parameters, followed by $H$, $\rho$ and $V_L$. The slight difference between GSI and total GSI indicate that interactions between variables are not strong; the same can be concluded by checking the first-order and total SMs from Figs. \ref{Fig.dambreak_sobol} and \ref{Fig.dambreak_sobol_zoomed}. We should note that the GSI analysis is not able to interpret locally (or instantaneously) the influence of each input variable, although it possesses a clear advantage which is producing a concise index to describe globally the influences.

\begin{figure}[H]
\centering
\includegraphics[width=1.\textwidth]{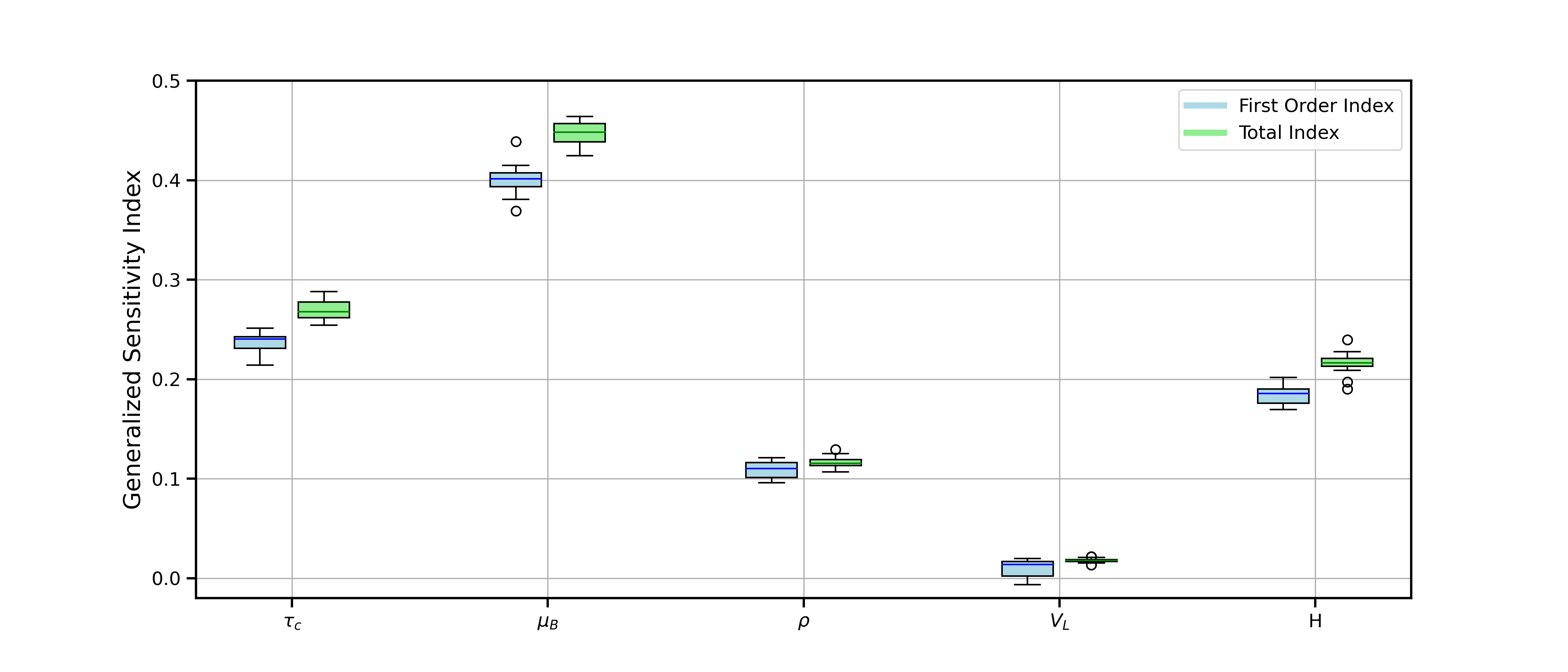}
\caption{Comparison between first-order Generalized Sensitivity Indices (GSI) and Total GSI for each input variable.}
\label{Fig.dambreak_boxplot}
\end{figure}

The theoretical computational gain for the parameters given in this case is $\mathrm{\costDW}/\mathrm{\costBD} = 252$, whereas the practical computational gain is approximately $35$ faster for the \textit{basis-derived} approach. The same observations made for the computational costs of Campbell2D function can be applied here.

\section{Conclusion and future works}
\label{sec.conclusion}

In this work, we employed a fast PF estimator of Sobol' indices in the context of high-dimensional output models with basis-expanded data, aiming to compute \textit{Sensitivity Maps} (SMs), called herein \textit{basis-derived} approach. Statistical estimation of Sobol' indices was explored and formulas for closed Sobol' indices were given, with an immediate extension to Sobol' indices of any order and total Sobol' indices. This estimator uses directly the basis coefficients and basis components, instead of using the original form of data output (\textit{dimension-wise} approach). It was proven that the \textit{basis-derived} approach involves much fewer operations in comparison with the \textit{dimension-wise} approach, hence ``fast'', which allows to calculate both SM and their associated bootstrap confidence bounds in a reasonable time.
This advantage also impacts the calculation of other quantities, such as the Generalized Sensitivity Index (GSI), which uses the covariance matrices calculated by the method. Furthermore, this estimator is general and can be applied with any basis expansion (e.g. wavelets, b-splines) and metamodeling techniques (e.g. neural networks, Polynomial Chaos Expansion).  

We applied the estimator in two applications, using the Principal Component Analysis (PCA) and Gaussian Process Regression (GPR) techniques: the analytical Campbell2D function and the idealized gradual dam-break flow of non-Newtonian fluid. Overall, the coupled procedure of dimension-reduction with metamodeling was capable of reducing the dimensionality of outputs and of producing accurate results with less computational resources. Moreover, the procedure worked with limited DoE size, in the order of $200$, which is particularly useful for high-cost simulations. The procedure allowed us to apply the \textit{basis-derived} PF estimator using a PF sample of sufficient size to make reasonable analyses. 

For the first application, the \textit{basis-derived} PF method showed to be capable of recovering the theoretical SMs with an acceptable accuracy, while performing less operations. For the second application, the method provided the continuous time series of Sobol' indices, which allowed us to better understand the phenomena by checking the sensitivity of each input variable over the output data. It was found that the influence of input variables change considerably in function of time, which reflects the transient nature of the phenomena. Non-newtonian characteristics of the fluid (plastic viscosity and yield stress) represented the major source of contribution over the total variance and the lifting dynamics of the gate only contributed in initial times of the flow.

\section*{Acknowledgments} 

We would like to acknowledge the Fundação de Amparo à Pesquisa do Estado de São Paulo (FAPESP) for the Ph.D. scholarship funding (processes 2022/05184-1 and 2023/08472-0) of the first author. Part of the research has been done in the frame of the Chair PILearnWater, part of the AI Cluster ANITI, funded by the French National Research Agency.

 \bibliography{cas-refs}
 \bibliographystyle{elsarticle-harv} 

\end{document}